\numberwithin{equation}{section}
\newtheorem{theorem}{Theorem}
\numberwithin{theorem}{section}
\newtheorem{proposition}[theorem]{Proposition}
\newtheorem{lemma}[theorem]{Lemma}
\newtheorem{introtheorem}{Theorem}
\theoremstyle{definition}\newtheorem{definition}[theorem]{Definition}
\theoremstyle{definition}\newtheorem{remark}[theorem]{Remark}
\theoremstyle{definition}\newtheorem{example}[theorem]{Example}
\theoremstyle{definition}\newtheorem*{notation*}{Notation}
\theoremstyle{definition}\newtheorem*{convention*}{Convention}
\theoremstyle{definition}
\theoremstyle{definition}\newtheorem*{acknowledgment*}{Acknowledgments}
\newcommand{\R}{\mathbb{R}}
\newcommand{\C}{\mathbb{C}}
\newcommand{\B}{\mathsf{B}}
\newcommand{\G}{\mathcal{G}}
\newcommand{\cst}{C^*}
\newcommand{\tx}{\mathcal{T}_X}
\newcommand{\ox}{\mathcal{O}_X}
\renewcommand{\op}[1]{\operatorname{#1}}
\DeclareMathOperator{\Aut}{Aut}
\DeclareMathOperator{\Iso}{Iso}
\DeclareMathOperator{\Span}{span}
\DeclareMathOperator{\id}{id}
\begin{document}

\title{KMS states on quantum Cuntz-Krieger algebras}

\begin{abstract}
We study the KMS states on local quantum Cuntz-Krieger algebras associated to  quantum graphs. Using their isomorphism to the Cuntz-Pimsner algebra of the quantum edge correspondence,  we show that the general criteria for KMS states can be translated into statements about the underlying quantum adjacency operator, somewhat analogously to the case of classical Cuntz-Krieger algebras. We study some examples of gauge actions, for which a complete classification of KMS states can be obtained.    
\end{abstract}

\author[M.\,Kumar]{Manish Kumar}
\email{manish.kumar@kuleuven.be}
\address{KU Leuven, Department of Mathematics, Celestijnenlaan 200B, 3001 Leuven, Belgium}

\author[M.\,Wasilewski]{Mateusz Wasilewski}
\email{mwasilewski@impan.pl}
\address{Institute of Mathematics of the Polish Academy of Sciences,
	ul.~\'Sniadeckich 8, 00--656
 Warsaw, Poland}

\subjclass[2020]{46L55, 46L08}

\keywords{Quantum graphs, Quantum Cuntz-Krieger algebras, KMS states}

\maketitle
\section{Introduction}
The study of KMS states has a long history, both in mathematical physics and operator algebra theory. In this article we study these objects for $C^{\ast}$-algebras associated to quantum graphs, namely the \emph{local quantum Cuntz-Krieger algebras} (see \cite{BHINW}).  We choose to work with these rather than the original quantum Cuntz-Krieger algebras (see \cite{BEVW}) because the local versions can be identified with certain Cuntz-Pimsner algebras (see \cite{Pim}), for which one can find a general condition for existence of KMS states in \cite{LN}.

Classical Cuntz-Krieger algebras (\cite{CK}) can also be realized as Cuntz-Pimsner algebras of the edge $\cst$-correspondence  and the general result on gauge actions and their KMS states then reduces to the condition that can be stated purely in terms of the adjacency matrix of the graph (see \cite{EFW}, and also \cite{OP} for the special case of Cuntz algebras). It turns out that in the  case when the adjacency matrix is irreducible there is a unique equilibrium inverse temperature $\beta$ and it is equal to the logarithm of the spectral radius of the  matrix. The aim of our paper is to perform a similar reduction in the case of quantum graphs. Quantum graphs are a non-commutative generalization of classical finite graphs without multiple edges, where we mean a triple  $(\B, A, \psi)$  consisting of a finite-dimensional $\cst$-algebra $\B$, a state $\psi$ on $\B$ and a quantum adjacency operator $A$ (see Subsection \ref{sub:quantum graph}).

We begin with the case of the usual gauge action, where the generators are multiplied by the number $e^{it}$.   In this case we obtain the following result.
\begin{introtheorem}
Let $\mathcal{G}:=(\B, A, \psi)$ be a quantum graph, where $\B\simeq \bigoplus_{a=1}^{d} M_{n_a}$ and let $(\gamma_t)$ be the gauge action on the associated local quantum Cuntz-Krieger algebra $\mathcal{O}_{E_{\mathcal{G}}}$. Then the KMS states on $\mathcal{O}_{E_{\mathcal{G}}}$ are in one-to-one correspondence with positive eigenvectors of a certain integer-valued matrix $D \in M_{d}(\mathbb{N})$ built from $A$. In particular, if the matrix $D$ is irreducible then the KMS state is unique.
\end{introtheorem}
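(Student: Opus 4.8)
The plan is to combine the identification of $\mathcal{O}_{E_{\mathcal{G}}}$ with the Cuntz--Pimsner algebra of the quantum edge correspondence $E_{\mathcal{G}}$ over $\B$ with the general Laca--Neshveyev description of KMS states for gauge-type dynamics on such algebras. Since the gauge action $(\gamma_t)$ fixes the coefficient algebra $\B \subseteq \mathcal{O}_{E_{\mathcal{G}}}$ pointwise, the KMS condition immediately forces the restriction $\tau := \varphi|_{\B}$ of any KMS$_\beta$ state $\varphi$ to be a \emph{tracial} state on $\B$, and the Laca--Neshveyev theorem asserts that $\varphi$ is completely determined by $\tau$, subject to a single compatibility equation. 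The whole problem then becomes: describe the tracial states on $\B$ satisfying this equation, and show that they are parametrised by positive eigenvectors of an integer matrix.

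First I would make the compatibility equation explicit. Writing $\phi$ for the left action and choosing a finite Parseval frame $\{\xi_i\}$ for the right Hilbert $\B$-module $E_{\mathcal{G}}$ (which exists since everything is finite dimensional), the relevant transfer operator is the completely positive map $F\colon\B\to\B$, $F(x)=\sum_i\langle\xi_i,\phi(x)\xi_i\rangle_{\B}$, and the Laca--Neshveyev condition reads $\tau\circ F=e^{\beta}\,\tau$ (with the conventional placement of $\beta$). The first genuine computation is to show that, for the quantum edge correspondence, $F$ is nothing but an explicit $\psi$-twist of the quantum adjacency operator $A$; this is where the precise bimodule structure of $E_{\mathcal{G}}$ — the left and right actions and the $\B$-valued inner product induced by $\psi$ — must be pinned down, and where the $\delta$-form and the weights of $\psi$ enter the normalisation. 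I expect this step to reduce the condition to a statement of the form $\tau\circ A^{\sharp}=e^{\beta}\tau$ for a canonical variant $A^{\sharp}$ of $A$.

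Next I would coordinatise. Tracial states on $\B\simeq\bigoplus_{a=1}^d M_{n_a}$ form the $d$-dimensional simplex $\{\tau_t=\sum_a t_a\operatorname{Tr}_{n_a}: t_a\ge 0,\ \sum_a t_a n_a=1\}$, so $F$ acts dually as a linear endomorphism of the weight vectors $t\in\mathbb{R}^d$, represented by a matrix $D=(D_{ab})\in M_d(\mathbb{N})$ whose entry $D_{ab}$ records the multiplicity with which the $b$-th matrix block feeds into the $a$-th under $A$ — the quantum analogue of the number of edges from $b$ to $a$. The key structural claim is that these entries are \emph{nonnegative integers}: this should follow from the defining axioms of the quantum adjacency operator (its Schur-idempotent/partial-isometry conditions), which force the pertinent block multiplicities to be ranks, hence dimensions, rather than arbitrary positive reals. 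Granting this, the eigenvalue equation becomes the matrix equation $Dt=e^{\beta}t$, so KMS$_\beta$ states correspond bijectively to positive eigenvectors of $D$ with eigenvalue $e^{\beta}$, establishing the one-to-one correspondence.

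Finally, the uniqueness clause is Perron--Frobenius: if $D$ is irreducible then it has, up to scaling, a unique positive eigenvector, necessarily at eigenvalue equal to the spectral radius $\rho(D)$, and normalising it to a tracial state singles out exactly one KMS state, at the distinguished inverse temperature $\beta=\log\rho(D)$, mirroring the classical Cuntz--Krieger picture. The main obstacle, I expect, is not the Perron--Frobenius endgame but the two computational steps above: identifying the transfer operator $F$ with the quantum adjacency operator while correctly tracking the possibly non-tracial state $\psi$ in the inner products, and then establishing integrality of $D$ from the quantum-graph axioms. Once those are in place, the remaining arguments are standard finite-dimensional trace bookkeeping.
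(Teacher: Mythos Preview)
Your proposal is correct and matches the paper's approach: the paper applies the Laca--Neshveyev criterion, computes the induced trace $\op{Tr}_\tau$ on the left action of $\B$ via the explicit rank-one decomposition $f_{ij}^a = \sum_k |f_{ik}^a \varepsilon_{\G}\rangle\langle f_{jk}^a \varepsilon_{\G}|$ together with $\langle x\varepsilon_{\G}, y\varepsilon_{\G}\rangle = \delta^{-2}A(x^*y)$, arriving at the matrix $D_{ab} = \langle \rho_a^{-1}, A(\rho_b^{-1})\rangle_\psi$ and the eigenvalue equation $D\lambda=\delta^2 e^\beta\lambda$. Integrality of $\delta^{-2}D$ is then established along both of the lines you anticipate---once by using the Schur-idempotent axiom to show that certain block operators $\delta^{-2}X^{ab}$ are projections (hence have integer trace), and once by constructing an orthonormal basis of $E_{\G}$ from orthogonal Kraus operators, which identifies the entries as $\dim\Span\{V_{ba}^r : r\}$---after which the uniqueness clause is indeed Perron--Frobenius.
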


This shows that identifying criteria for KMS states with respect to gauge actions ultimately boils down  to  understanding certain classical graphs with multiple edges.
We prove this theorem by computing explicitly the induced trace from Theorem \ref{Thm:LN}, which is a crucial step in actually classifying the KMS states. Because of the concrete form, for other type of actions we are also able to derive a condition for KMS states, provided that we understand the generator of the action well enough. 

In Subsection \ref{subsec:orth} we take a closer look on the edge correspondences. The quantum edge correspondence, denoted $E_\G$ and  introduced in \cite{BHINW},  is a cyclic sub-$\cst$-correspondence of $\B\otimes_\psi\B$ and generalizes the usual edge correspondence for a classical graph.
\begin{introtheorem}
Let $\mathcal{G}:=(\B, A, \psi)$ be a quantum graph and let $E_{\mathcal{G}}$ be its edge correspondence. Then we can construct an orthonormal basis of $E_{\mathcal{G}}$ given a Kraus decomposition of $A$ consisting of mutually orthogonal operators (see Proposition \ref{Prop:qadjformula} for details).    
\end{introtheorem}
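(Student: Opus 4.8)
The plan is to construct the orthonormal basis of $E_{\mathcal{G}}$ explicitly from a Kraus decomposition of the quantum adjacency operator $A$. Recall that the edge correspondence $E_{\mathcal{G}}$ sits inside the interior tensor product $\B \otimes_\psi \B$ as a cyclic sub-$\cst$-correspondence, and that the quantum adjacency operator $A$ admits a Kraus decomposition $A(x) = \sum_i T_i x T_i^*$ for some family of operators $T_i \in \B$. The starting point is to write the cyclic generator of $E_{\mathcal{G}}$ in terms of these Kraus operators: the defining element of the correspondence should be expressible as an explicit vector in $\B \otimes_\psi \B$ whose right support recovers the range of $A$, and one would first check that this generator decomposes as a sum of simple tensors built from the $T_i$.

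First I would impose that the Kraus operators be chosen mutually orthogonal, meaning orthogonal with respect to the Hilbert--Schmidt (or $\psi$-weighted) inner product on $\B$; this is exactly the hypothesis needed so that the cross terms in the relevant inner-product computations vanish. Using the formula for the inner product on $\B \otimes_\psi \B$, which pairs the right-hand legs via $\psi$ and multiplies the left-hand legs, I would compute $\langle T_i \otimes \eta_i, T_j \otimes \eta_j \rangle$ and show that orthogonality of the $T_i$ forces the off-diagonal entries to vanish, leaving a block-diagonal Gram matrix. The candidate basis vectors are then obtained by taking, within each block, a suitable family of elementary tensors indexed by matrix units of the $M_{n_a}$ summands, normalized against the diagonal Gram entries.

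The main technical step is to verify that the resulting orthonormalized family is complete, i.e. that its closed span is all of $E_{\mathcal{G}}$ and not merely a proper subcorrespondence. For this I would use cyclicity: since $E_{\mathcal{G}}$ is generated by the distinguished vector under the left and right actions of $\B$, it suffices to show that every element of the form $b \cdot \xi \cdot b'$ lies in the span of the proposed basis, which reduces to a direct (if slightly tedious) expansion using the multiplication rules of the matrix units and the Kraus decomposition. The counting of basis elements, matching the Hilbert-space dimension of $E_{\mathcal{G}}$ as computed from $A$ and $\psi$, provides an independent consistency check that nothing has been over- or under-counted.

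The hard part will be bookkeeping: keeping track of the interaction between the internal matrix-unit indices of each block $M_{n_a}$, the Kraus index $i$, and the $\psi$-weights (the modular data of the state $\psi$), so that the normalization constants come out correctly and the inner product genuinely reduces to a Kronecker delta. In particular one must be careful that the inner product on the interior tensor product involves the right action of $\B$ on the left leg, so the orthogonality of the $T_i$ has to be combined with the orthogonality of the matrix units in a compatible way; handling the case of a non-tracial state $\psi$ (where the weights $n_a$ or the eigenvalues of the density matrix enter) is where the computation is most delicate. The precise statement, including the explicit formula for the basis vectors and normalizations, is recorded in Proposition \ref{Prop:qadjformula}.
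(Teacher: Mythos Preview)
Your plan has the right spirit but contains a concrete error and misses the key structural feature of the construction. First, the inner product on $\B\otimes_\psi\B$ is $\langle a\otimes b, c\otimes d\rangle=\psi(a^\ast c)\,b^\ast d$: it is the \emph{left} legs that are paired via $\psi$ and the \emph{right} legs that are multiplied to give a $\B$-valued output. You have this reversed, and as a consequence you place the Kraus operators on the wrong leg when you look at $\langle T_i\otimes\eta_i,T_j\otimes\eta_j\rangle$. Second, the relevant orthogonality of the Kraus operators is not the Hilbert--Schmidt inner product but the specific twisted condition $\op{Tr}(\rho^{-1}V_r^\ast V_q)=\delta^2\delta_{rq}$ (this is exactly what Proposition~\ref{Prop:qadjformula} provides), and the mechanism by which it enters is not that ``off-diagonal Gram entries vanish'' for simple tensors.

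The paper's basis elements are \emph{not} simple tensors: in the single-block case one sets
\[
\phi_{ar}=\frac{1}{\delta}\sum_{j}e_{aj}\rho^{-1}\otimes V_r e_{j1},
\]
with the Kraus operator on the right leg and a sum over an internal index $j$ on both legs. When you compute $\langle\phi_{a_1r_1},\phi_{a_2r_2}\rangle$, the $\psi$-pairing on the left leg produces $\delta_{a_1a_2}\delta_{j_1j_2}(\rho^{-1})_{j_1}$, and the sum over $j$ against the right leg turns the remaining expression into $e_{11}\op{Tr}(\rho^{-1}V_{r_1}^\ast V_{r_2})$, at which point the Kraus orthogonality gives $\delta^2\delta_{r_1r_2}e_{11}$. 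The internal summation is thus what manufactures the trace needed to invoke the orthogonality; without it the computation does not close. Spanning is then checked by the identity $\sum_r\phi_{ar}e_{1t}V_r^\ast=\delta\, e_{at}\varepsilon$, which recovers the generators $e_{at}\varepsilon$ of $E_{\mathcal G}$ as a right module. Your proposal, as written, does not reach either of these two points.
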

This result allows us to reprove the previous results about KMS states, because having an explicit orthonormal basis is another way to compute the induced trace. We also handle the case of a more general gauge action.

In the last part of the paper we provide an example of a gauge action on $\mathcal{O}_{E_{\mathcal{G}}}$, whose restriction to $\B$ is non-trivial, using the modular group. In this case we no longer work with induced traces but with more general functionals. In this case once again the KMS states are governed by a certain matrix $D \in M_d(\mathbb{R})$, but this time it is more complicated, e.g. it depends on the inverse temperature $\beta$. In the case of the complete quantum quantum graph the situation simplifies and a more satisfactory answer can be obtained.

\section{Preliminaries}
\subsection{Cuntz-Pimsner algebras}

Let $\B$ be a $\cst$-algebra.
A {\em $\cst$-correspondence} over  $\B$ is a right Hilbert $\B$-module $X$ together with a $\B$-valued inner product $\langle\cdot,\cdot\rangle$ and a $\ast$-homomorphism $\phi_X:\B\to\op{B}(X)$, where $\op{B}(X)$ is the algebra of all adjointable operators on $X$. We simply write $b\xi$ for $\phi_X(b)\xi$, $b\in\B,\xi\in X$.  We follow the convention of linearity in the second coordinate of the inner product. We now briefly describe below the construction of Toeplitz-Pimsner and Cuntz-Pimsner algebras  associated to a $C^*$-correspondence over $\B$ (see \cite{Pim} for more details on the topic).

The {\em Toeplitz-Pimsner algebra} $\mathcal{T}_X$ is the universal $\cst$-algebra generated by elements $\pi(b)$ and $T_\xi$ with $b\in\B$ and $\xi\in X$ such that $\pi:\B\to\tx$ is a $\ast$-homomorphism, $T_{a\xi b}=\pi(a)T_\xi\pi(b)$ and $T_{\xi}^*T_\eta=\pi(\langle\xi,\eta\rangle)$ for $a,b\in\B$ and $\xi,\eta\in X$. A concrete way of constructing such algebras is as follows:
Let $\mathcal{F}(X)=\B\oplus\bigoplus_{n\geq 1}X^{\otimes_\B n}$ where $X^{\otimes_\B n}$ denotes the $n$-times (internal) tensor product of the $\cst$-correspondence $X$. Consider the $\ast$-homomorphism $\pi:\B\to\op{B}({\mathcal{F}(X)})$ and the left creation operators $T_\xi\in \op{B}({\mathcal{F}(X)})$, $\xi\in X$  given by $\pi(a)\eta=a\eta$ and $T_\xi(\eta)=\xi\otimes\eta$ for $a\in\B$, $\xi\in \mathcal{F}(X)$. Then $\tx$ is the $\cst$-subalgebra of $\op{B}(\mathcal{F}(X))$ generated by $\pi(a), T_\xi$.

The {\em Cuntz-Pimsner algebra} $\mathcal{O}_X$ is the quotient of $\tx$ by the ideal generated by elements of the form $\pi(b)-j_X(\phi_X(b))$ for $b\in I_X$ where \[I_X=\{a\in \B; \phi_X(a)\in \mathcal{K}(X) \mbox{ and } ab=0\;\forall b\in \ker\phi_X\}.\] Here $\mathcal{K}(X)$ is the space of compact operators on $X$ generated by $|\xi\rangle\langle\eta|\in \op{B}(X)$, $\xi,\eta\in X$, where $|\xi\rangle\langle\eta|(\zeta)=\xi\langle\eta,\zeta\rangle$ for $\zeta\in X$, and $j_X:\mathcal{K}(X)\to\tx$ is the homomorphism given by $j_X(|\xi\rangle\langle\eta|)=T_\xi T_\eta^*$ for $\xi,\eta\in X$.

Now let $\sigma:\R\to   \Aut (\B)$ be a one-parameter group of automorphisms of $\B$, and let $U:\R\to \Iso(X)$ be a one-parameter group of isometries on $X$ such that
\[U_t (a\xi)=\sigma_t(a)U_t\xi\;\;\;\mbox{ and }\;\;\langle U_t\xi, U_t\eta\rangle=\sigma_t(\langle\xi,\eta\rangle).\]
Moreover, both are assumed to be strongly continuous i.e. $t\mapsto\sigma_t(a)$ and $t\mapsto U_t\xi$ are continuous for all $a\in \B$ and $\xi\in X$.
By the universal property of the Toeplitz-Pimsner algebra, there exists a (unique) automorphism $\gamma_t$ on $\tx$ such that $\gamma_t(\pi(a))=\pi(\sigma_t(a))$ and $\gamma_t(T_\xi)=T_{U_t\xi}$
 for all $a\in \B$ and $\xi\in X$. It is immediate that $t\mapsto\gamma_t$ is a strongly continuous one-parameter automorphism group of $\tx$.

 Further note that the expression $U_t(a\xi)=\sigma_t(a)U_t\xi$ for all $\xi\in X$ precisely means that  $\phi_X(\sigma_t(a))=U_t\phi_X(a)U_t^*$. This implies that $a\in \ker\phi_X$ iff $\sigma_t(a)\in \ker\phi_X$, and $\phi_X(a)\in \mathcal{K}(X)$ iff $\phi_X(\sigma_t(a))\in \mathcal{K}(X)$ such that $\gamma_t(j_X(\phi_X(a)))=j_X(U_t\phi_X(a)U_t^*)=j_X(\phi_X(\sigma_t(a)))$. This shows that $a\in I_X$ iff $\sigma_t(a)\in I_X$ and $\gamma_t(I_X)=I_X$. Hence $\gamma_t$ further induces a strongly continuous one-parameter group of automorphisms on the quotient Cuntz-Pimsner algebra $\ox$.

\subsection{Quantum graphs}\label{sub:quantum graph}

Let $\B$ be a finite dimensional $\cst$-algebra equipped with a faithful state $\psi:\B\to\C$. We denote by $L^2(\B,\psi)$ or simply by $L^2(\B)$  the corresponding GNS Hilbert space, which as a  set is nothing but $\B$ itself. Let $m:L^2(\B)\otimes L^2(\B)\to L^2(\B)$ be the multiplication map, and $m^*:L^2(\B)\to L^2(\B)\otimes L^2(\B)$ be its adjoint. For $\delta>0$, the state $\psi:\B\to\C$ is called a {\em $\delta$-form} if $mm^*=\delta^2 \id$.

\begin{definition}
    A {\em quantum graph} $\G$ is a triple $(\B, A,\psi)$ where $\B$ is a finite-dimensional $\cst$-algebra, $\psi:\B\to\C$ is a $\delta$-form, and $A:\B\to\B$ is a {\em quantum adjacency operator} i.e. it satisfies $m(A\otimes A)m^*=\delta^2 A$ and is $\ast$-preserving (or, equivalently, completely positive).
\end{definition}

Let $\G=(\B, A, \psi)$ be a quantum graph, where $\psi:\B\to\C$ is  a $\delta$-form. Write 
\[\B=\bigoplus_{a=1}^dM_{n_a}\] 
and 
\[\psi=\bigoplus_{a=1}^d\Tr(\cdot\rho_a)\] for some natural numbers $d, n_a$ and positive invertible operators $\rho_a\in M_{n_a}$ such that $\sum_{a=1}^d\Tr(\rho_a)=1$. Here $\Tr$ denotes the (non-normalized) traces on $M_{n_a}$. We assume that each $\rho_a$ is a diagonal operator with diagonal entries equal to $\psi(e_{11}^a), \ldots, \psi(e_{n_an_a}^a)$. Here $e_{ij}^a$ denotes the matrix units of $M_{n_a}$. That $\psi$ is a $\delta$-form then means that $\Tr(\rho_a^{-1})=\delta^2=\sum_{k=1}^{n_a}\psi(e_{kk}^a)^{-1}$ for all $1\leq a\leq d.$ Set
\[f_{ij}^a=\frac{e_{ij}^a}{\psi(e_{ii}^a)^{1/2}\psi(e_{jj}^a)^{1/2}}.\]
Then we have
\begin{align*}
    &m^*(f_{ij}^a)=\sum_{k=1}^{n_a}f_{ik}^a\otimes f_{kj}^a\\
    & f_{ij}^af_{rs}^b=\delta_{ab}\delta_{jr}\frac{f_{is}^a}{\psi(e_{jj}^a)}
\end{align*}
for $1\leq a,b\leq d$ and $1\leq i,j\leq n_a, 1\leq r,s\leq n_b$. Here $\delta_{ab}$ denotes the Kronecker delta.

\subsection{Quantum Edge Correspondence} Let $\G=(\B, A, \psi)$ be a quantum graph such that $\psi$ is a $\delta$-form. Consider the $\cst$-correspondence $\B\otimes_\psi\B$ which as a vector space is equal to $\B\otimes\B$ as $\psi$ is faithful; the $\B$-valued inner product is defined by $\langle a\otimes b, c\otimes d\rangle:= \psi(a^{\ast}c)b^{\ast} d$. We now consider a sub $\cst$-correspondence of $\B\otimes_\psi\B$ as follows.

Consider the {\em quantum edge indicator} $\varepsilon_\G$ given by 
\[\varepsilon_\G=\frac{1}{\delta^2}(\id \otimes A)m^*(1)\in \B\otimes_\psi\B.\]
The {\em quantum edge correspondence} $E_\G$ is the cyclic bimodule generated by $\varepsilon_\G$ i.e.
\[E_\G=\B\varepsilon_\G\B=\Span\{a\varepsilon_\G b; a,b\in \B\}.\]
Recall from \cite[Theorem 2.9]{BHINW} that $E_\G$ is faithful (i.e. the left action is a faithful homomorphism) iff $\ker A$ does not contain a central summand of $\B$, and $E_\G$ is full (i.e.  $\Span\langle E_\G,E_\G\rangle=\B$) iff range of $A$ is not orthogonal to a central summand of $\B$.

\subsection{Quantum Cuntz-Krieger algebras}
We now recall the definition of quantum Cuntz-Krieger algebras as defined in \cite{BEVW} (note that the third condition does not appear there, but it is a natural unitality condition that we adopt, see also \cite[Definition 3.1]{BHINW}).
\begin{definition}
    Let $\G=(\B, A, \psi)$ be a quantum graph. The {\em quantum Cuntz-Krieger algebra} associated to $\G$ is defined to be the universal unital $\cst$-algebra $\mathcal{O}(\G)$ generated by the elements $S(b), b\in \B$ where $S:\B\to\mathcal{O}(\G)$ is a linear map satisfying the following:
    \begin{enumerate}
        \item $\mu(\mu\otimes \id)(S\otimes S^*\otimes S)(m^*\otimes \id)m^*=S$
        \item $\mu(S^*\otimes S)m^*=\mu(S\otimes S^*)m^*A$
        \item $\mu(S\otimes S^*)m^*(1)=\frac{1}{\delta^2}\id$.
    \end{enumerate}
    Here $S^*$ is the map $S^*:\B\to\mathcal{O}(\G)$ given by $S^*(b)=S(b^*)^*$ and $\mu:\mathcal{O}(\G)\otimes\mathcal{O}(\G)\to\mathcal{O}(\G)$ is the multiplication operator.
\end{definition}

We will impose formally stronger relations, called the local Cuntz-Krieger relations in \cite[Definition 3.4]{BHINW}, namely the first two equalities are replaced by $\mu(\mu\otimes \id)(S\otimes S^*\otimes S)(m^*\otimes \id)=\frac{1}{\delta^2}Sm$ and $\mu(S^*\otimes S)=\frac{1}{\delta^2}\mu(S\otimes S^*)m^* Am$. They ensure that we obtain a $C^{\ast}$-algebra isomorphic to the Cuntz-Pimsner algebra associated to the quantum edge correspondence under mild conditions (that the quantum edge correspondence is faithful). In cases that both versions of quantum Cuntz-Krieger algebras are understood, they are isomorphic. We recall the following results:

\begin{theorem}[{\cite[Theorem 3.6]{BHINW}}]\label{thm:local CKA and CPA}
    Let $\G=(\B, A,\psi)$ be a quantum graph such that $\psi$ is a $\delta$-form. Let $(\pi_\G, t_\G)$ denote the universal covariant representation of the quantum edge correspondence $E_\G$. Assume that $E_\G$ is faithful.  Then $\mathcal{O}_{E_\G}$ is the local quantum Cuntz-Krieger algebra of $\G$ with the associated local quantum Cuntz-Krieger family  $S:\B\to \mathcal{O}_{E_\G}$ given by $S(b)=\frac{1}{\delta}t_{\G}(b\varepsilon_\G)$ for $b\in \B$.
\end{theorem}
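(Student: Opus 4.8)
The plan is to identify $\mathcal{O}_{E_\G}$ with the universal local quantum Cuntz--Krieger algebra by matching universal properties: I would set up a natural bijection between Cuntz--Pimsner covariant representations of $E_\G$ and local quantum Cuntz--Krieger families $S\colon\B\to\mathcal{O}$. Two structural facts make this feasible. First, since $\B$ is finite dimensional, $E_\G$ is a finite-dimensional vector space, so $\mathcal{K}(E_\G)=\op{B}(E_\G)$; as $E_\G$ is assumed faithful we have $\ker\phi=0$ (where $\phi$ is the left action on $E_\G$), hence $I_{E_\G}=\B$ and the Cuntz--Pimsner covariance may be imposed on all of $\B$, in particular at the unit. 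Second, cyclicity $E_\G=\B\varepsilon_\G\B$ shows that $t_\G$ is determined by $\pi_\G$ together with its value on $\varepsilon_\G$, via $t_\G(a\varepsilon_\G b)=\pi_\G(a)\,t_\G(\varepsilon_\G)\,\pi_\G(b)$, so that $S(1)=\tfrac1\delta t_\G(\varepsilon_\G)$ and $\pi_\G$ already encode all the data.

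For the forward direction I would take the universal covariant representation $(\pi_\G,t_\G)$, put $S(b)=\tfrac1\delta t_\G(b\varepsilon_\G)$, and verify the local relations through the following dictionary. The unitality relation (3) corresponds to Cuntz--Pimsner covariance: writing $m^*(1)=\sum_i\xi_i\otimes\eta_i$ one computes $\mu(S\otimes S^*)m^*(1)=\sum_i S(\xi_i)S^*(\eta_i)=\tfrac1{\delta^2}j_{E_\G}\!\big(\sum_i|\xi_i\varepsilon_\G\rangle\langle\eta_i^*\varepsilon_\G|\big)$, so the relation follows once one proves the frame identity $\sum_i|\xi_i\varepsilon_\G\rangle\langle\eta_i^*\varepsilon_\G|=\id_{E_\G}$ and uses covariance $\pi_\G(1)=j_{E_\G}(\phi(1))=\id$. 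The second relation encodes that the $\B$-valued inner product on $E_\G$ is governed by $A$: computing $\mu(S^*\otimes S)m^*$ via $t_\G(\xi)^*t_\G(\eta)=\pi_\G(\langle\xi,\eta\rangle)$ reduces it to inner products $\langle\,\cdot\,\varepsilon_\G,\,\cdot\,\varepsilon_\G\rangle$, which are matched against the right-hand side through the defining formula $\varepsilon_\G=\tfrac1{\delta^2}(\id\otimes A)m^*(1)$ and the quantum adjacency identity $m(A\otimes A)m^*=\delta^2A$. The first relation is then the residual creation/annihilation relation and follows from the two above together with the frame identity; in particular it yields $S(b)=\pi_\G(b)S(1)$.

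For the converse I would, given an arbitrary local quantum Cuntz--Krieger family $S$ on some $\mathcal{O}$, reconstruct a covariant representation by setting $\pi(b)=\delta^2\mu(S\otimes S^*)m^*(b)$, the analogue of the classical relation $p_v=\sum_{s(e)=v}s_es_e^*$, and $t(a\varepsilon_\G b)=\delta\,\pi(a)S(1)\pi(b)$ on the generators of $E_\G$, extended linearly. The relations should guarantee that $\pi$ is a unital $\ast$-homomorphism, that $t$ is well defined and satisfies the Toeplitz relations $t(a\xi b)=\pi(a)t(\xi)\pi(b)$ and $t(\xi)^*t(\eta)=\pi(\langle\xi,\eta\rangle)$ (from the second relation), and that the Cuntz--Pimsner condition $\pi=j_{E_\G}\circ\phi$ holds (from the third relation and the frame identity); relation (1), giving $S(b)=\pi(b)S(1)$, ensures that the induced homomorphism $\mathcal{O}_{E_\G}\to\mathcal{O}$ sends $\tfrac1\delta t_\G(b\varepsilon_\G)$ back to $S(b)$. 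By the two universal properties the assignments are mutually inverse, producing the desired $\ast$-isomorphism.

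The main obstacle I anticipate is the two computational cores: establishing the frame identity for $E_\G$ coming from $m^*(1)$, and computing the inner products $\langle a\varepsilon_\G b,\,c\varepsilon_\G d\rangle$ explicitly in terms of $A$, $m$ and $m^*$ so that the second relation can be read off from the quantum adjacency identity; here the explicit module structure and the formulas for $m^*(f_{ij}^a)$ and $f_{ij}^af_{rs}^b$ are the natural tools. In the converse direction the delicate points are the well-definedness of the reconstructed $t$ (independence of the chosen expression of an element of $E_\G$ as a combination of the $a\varepsilon_\G b$) and the faithfulness of $\pi$, which is exactly what upgrades a surjection to an isomorphism; faithfulness of $E_\G$ enters precisely here, ensuring $I_{E_\G}=\B$ so that relation (3) forces the full Cuntz--Pimsner covariance.
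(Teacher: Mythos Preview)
The paper does not contain a proof of this theorem: it is stated with attribution to \cite[Theorem~3.6]{BHINW} and used as a black box, so there is no argument in the present paper to compare your proposal against. Your outline is the natural universal-property matching strategy (bijecting covariant representations of $E_\G$ with local quantum Cuntz--Krieger families via $S(b)=\tfrac{1}{\delta}t_\G(b\varepsilon_\G)$, using finite-dimensionality to get $I_{E_\G}=\B$ and cyclicity to reduce to $\varepsilon_\G$), and the computational ingredients you flag---the frame identity and the formula $\langle x\varepsilon_\G,y\varepsilon_\G\rangle=\tfrac{1}{\delta^2}A(x^*y)$---are exactly the facts the present paper quotes from \cite[Theorems~2.5 and 2.12]{BHINW}; so your plan is consistent with what the cited source provides, but a detailed comparison would require consulting \cite{BHINW} directly.
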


In this work we deal exclusively with the KMS states on Cuntz-Pimsner algebras associated to the quantum edge correspondence and then we use the above theorem to induce KMS states on the associated (local) quantum Cuntz-Krieger algebras. We recall the following definition.

\begin{definition}
    Let $\mathsf{D}$ be a $\cst$-algebra, and let $\sigma=(\sigma_t)_{t\in\R}$ be a one-parameter group of automorphisms on $\mathsf{D}$. Let $\beta\in\R$. A state $\varphi:\mathsf{D}\to\C$ is said to be a $(\sigma,\beta)$-{\em KMS state} (or a KMS state at the inverse temperature point $\beta$) if $\varphi$ is $\sigma_t$ invariant and satisfies :
    \[\varphi(ab)=\varphi(b\sigma_{i\beta}(a))\]
    for all $b\in \mathsf{D}$ and entire vectors $a\in \mathsf{D}$. 
\end{definition}

\section{KMS states (tracial setting)}

We want to determine the KMS states of quantum Cuntz-Pimsner algebras with respect to some natural dynamical systems.
Let $\G=(\B, A, \psi)$ be a quantum graph such that $\psi$ is a $\delta$-form. Write
\[\B=\bigoplus_{a=1}^dM_{n_a}\;\;\mbox{and }\psi=\bigoplus_{a=1}^d\Tr(\rho_a\cdot)\]
as above. Let $\sigma=(\sigma_t)_{t\in \R}$ be a one-parameter group of automorphisms on $\B$ and let $(U_t)_{t\in \R}$ be a one-parameter group of invertible isometries on $E_\G$ such that $U_t(b\xi)=\sigma_t(b)U_t\xi$ and $\langle U_t\xi, U_t\eta\rangle=\sigma_t(\langle\xi,\eta\rangle)$. Let $\gamma=(\gamma_t)_{t\in\R}$ be the induced one-parameter groups of automorphisms on $\mathcal{O}_{E_\G}$.

We first assume that $\sigma$ is trivial so that the $\beta$-KMS states on $\B$ are tracial states. In this case we will use the following theorem (slightly simplified because we are working in the finite dimensional setting, we also did not include the positive energy condition in the statement, which will be satisfied in all of the examples we consider).

\begin{theorem}[{\cite[Theorems 2.1, 2.5]{LN}}]\label{Thm:LN}
Let $X$ be a $\B$-correspondence and let $(U_t)_{t\in\R}$ be a one-parameter group of bimodular isometries on $X$. We will write $U_{t} = \exp(itD)$, where $D$ is the generator. Let $\gamma$ be the induced action on $\mathcal{O}_X$, which is trivial on $\B$. Then $\phi: \mathcal{O}_{X} \to \mathbb{C}$ is a $(\gamma,\beta)$ KMS state iff its restriction $\tau:=\phi_{|\B}$ is a trace such that $\op{Tr}_{\tau}(b \exp(-\beta D)) = \tau(b)$, where $\op{Tr}_{\tau}$ is the trace on $\op{B}(X)$ given by $\op{Tr}(|\xi\rangle\langle \xi|):= \tau(\langle \xi, \xi\rangle)$ (see \cite[Theorem 1.1.]{LN}). Moreover, the state $\phi$ is determined as:
\begin{align*}
    \phi(T_{\xi_1}\cdots T_{\xi_n}T_{\eta_m}\cdots T_{\eta_1}^*)=\delta_{n,m}\tau(\langle\eta_1\otimes\cdots\otimes\eta_n, e^{-\beta D}\xi_1\otimes\cdots \otimes e^{-\beta D}\xi_n\rangle)
\end{align*}
for $\xi_i,\eta_j\in X$.
\end{theorem}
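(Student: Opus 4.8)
The statement is the finite-dimensional specialization of the Laca--Neshveyev characterization of KMS states on Cuntz--Pimsner algebras \cite{LN}, so the plan is to re-derive it directly, exploiting that in finite dimensions every monomial is an entire vector, $D$ is self-adjoint with discrete real spectrum, and $e^{-\beta D}$ is a bounded, positive, invertible operator. The argument runs on two elementary facts. First, $\mathcal{O}_X$ is the closed linear span of the monomials $T_{\xi_1}\cdots T_{\xi_n}T_{\eta_m}^*\cdots T_{\eta_1}^*$, obtained by reducing any word in $\pi(b),T_\xi,T_\xi^*$ to normal form via $T_\xi^*T_\eta=\pi(\langle\xi,\eta\rangle)$ and $\pi(b)T_\xi=T_{b\xi}$; hence a state is determined by its values on these. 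Second, the induced dynamics satisfies $\gamma_{i\beta}(T_\xi)=T_{U_{i\beta}\xi}=T_{e^{-\beta D}\xi}$, so the KMS identity takes the workable form $\phi(T_\xi b)=\phi(b\,T_{e^{-\beta D}\xi})$. Applying the KMS identity to $a,b\in\pi(\B)$, where $\gamma$ acts trivially, immediately gives $\phi(ab)=\phi(ba)$, i.e. $\tau:=\phi|_{\B}$ is a trace.

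For the forward direction the crux is the vanishing of $\phi$ on every monomial with $n\neq m$. Decomposing $X$ into $D$-eigenspaces, a monomial built from eigenvectors is a $\gamma_t$-eigenvector with phase $\exp(it(\sum_i\lambda_i-\sum_j\mu_j))$, so $\gamma$-invariance of $\phi$ annihilates all monomials of nonzero total $D$-weight. This is strictly weaker than what is needed --- for instance $T_\zeta T_{\zeta'}$ with $D\zeta=\lambda\zeta$ and $D\zeta'=-\lambda\zeta'$ has zero total weight but $n-m=2$ --- and bridging the gap requires the full invariance of $\phi$ under the gauge circle $T_\xi\mapsto zT_\xi$. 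This is precisely where the positive-energy hypothesis suppressed from the statement of Theorem \ref{Thm:LN} enters: positive energy forces KMS states to be gauge-invariant, equivalently to factor through the conditional expectation onto the gauge-fixed subalgebra. I regard this step as the main obstacle. Granting it, the formula on diagonal monomials is a bookkeeping induction: repeatedly move the leading creation operator to the far right using $\phi(T_\xi b)=\phi(b\,T_{e^{-\beta D}\xi})$, contract the newly adjacent factor $T_{\eta_1}^*T_{e^{-\beta D}\xi_1}=\pi(\langle\eta_1,e^{-\beta D}\xi_1\rangle)$, absorb the resulting coefficient into the next annihilator, and iterate; this assembles exactly the nested $\B$-valued inner product $\langle\eta_1\otimes\cdots\otimes\eta_n,e^{-\beta D}\xi_1\otimes\cdots\otimes e^{-\beta D}\xi_n\rangle$ on $X^{\otimes_{\B} n}$ before applying $\phi\circ\pi=\tau$.

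The invariance condition then falls out of the Cuntz--Pimsner covariance relation. Since $\B$ is finite-dimensional and the left action is by compacts, there is a finite Parseval frame $\{u_j\}\subseteq X$ with $\sum_j|u_j\rangle\langle u_j|=\id_X$, whence $\pi(b)=\sum_j T_{\phi_X(b)u_j}T_{u_j}^*$; applying $\phi$ together with the $n=m=1$ case just derived gives $\tau(b)=\sum_j\tau(\langle u_j,e^{-\beta D}\phi_X(b)u_j\rangle)=\op{Tr}_\tau(e^{-\beta D}\phi_X(b))$, which is the asserted equality $\op{Tr}_\tau(b\exp(-\beta D))=\tau(b)$. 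For the converse I would run this in reverse: given a trace $\tau$ with $\op{Tr}_\tau(b\,e^{-\beta D})=\tau(b)$, define $\phi$ on monomials by the displayed formula (and $0$ off the diagonal) and verify the KMS identity on the dense subalgebra of entire monomials. The delicate point --- and the reason the \emph{equality} rather than the Toeplitz subinvariance inequality is required --- is the well-definedness and positivity of this functional, which I would settle as in \cite{LN} by realizing $\phi$ through the trace $\op{Tr}_\tau(e^{-\beta D}\,\cdot\,)$ on the Fock representation and checking that the covariance relation is respected. I expect the positive-energy/gauge-invariance step and this well-definedness verification to be the only parts demanding more than routine computation.
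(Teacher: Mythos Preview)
The paper does not give its own proof of this theorem: it is quoted as a black box from Laca--Neshveyev \cite[Theorems 2.1, 2.5]{LN}, with only the parenthetical remark that the statement has been simplified for the finite-dimensional setting and that the positive-energy hypothesis has been suppressed because it holds in all the examples considered. There is therefore no in-paper proof to compare your proposal against.

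That said, your sketch is a faithful reconstruction of the Laca--Neshveyev argument specialized to finite dimensions, and you have correctly isolated the two genuinely nontrivial steps: (i) that a $(\gamma,\beta)$-KMS state is automatically invariant under the full gauge circle action, which is where positive energy enters and without which the vanishing on off-diagonal monomials can fail exactly as in your $T_\zeta T_{\zeta'}$ example; and (ii) that the functional defined by the displayed formula descends from the Toeplitz algebra to $\mathcal{O}_X$, i.e.\ is compatible with the Cuntz--Pimsner covariance relation, which is why the equality $\op{Tr}_\tau(b\,e^{-\beta D})=\tau(b)$ rather than the Toeplitz subinvariance inequality is needed. The remaining pieces of your outline---traciality of $\tau$ from triviality of $\gamma$ on $\B$, the inductive contraction yielding the nested inner product, and the derivation of the trace identity from a Parseval frame together with the covariance relation $\pi(b)=\sum_j T_{bu_j}T_{u_j}^\ast$ in $\mathcal{O}_X$---are correct and routine in this setting. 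For the purposes of the present paper, however, the theorem is simply imported from \cite{LN} and used as a tool; no independent proof is offered or required.
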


\begin{remark}
1.
 We are first interested in the $\beta$-KMS states on $\mathcal{O}_{E_\G}$ with respect to the {\em gauge action
 } i.e. when $U_t=e^{it}$ for all $t\in \R$. On the other hand, there is a natural  gauge action  $(\Tilde{\gamma}_t)_{t\in \R}$  on the (local) quantum Cuntz-Krieger algebra given by $\tilde{\gamma}_t(S(b))=e^{it}S(b)$. If $E_\G$ is faithful, by the universal property, it is clear that the isomorphism between $\mathcal{O}_{E_\G}$ and the local quantum Cuntz-Krieger algebra (as described in Theorem \ref{thm:local CKA and CPA} above) is an equivarient map under the gauge actions. Thus there is a one-to-one correspondence between the KMS states  on the two algebras with respect to the gauge actions. Moreover, such KMS state $\phi$ on the local Cuntz-Krieger algebra will be determined by 
 \begin{align*}
    \phi(S(a_1)\cdots S(a_n)S(b_m)^*\cdots S(b_1)^*)&= \frac{1}{\delta^{2n}}\delta_{n,m} e^{-n\beta}\tau(\langle b_1\varepsilon_\G\otimes\cdots \otimes b_n\varepsilon_\G, a_1\varepsilon_\G\otimes\cdots \otimes a_n\varepsilon_\G\rangle)\\
   &=\frac{1}{\delta^{2n}}\delta_{n,m} e^{-n\beta}\tau(A(b_n^*A(b_{n-1}^*\cdots A(b_2^*A(b_1^*a_1)a_2)\cdots a_{n-1})a_n)).
 \end{align*}
 for $a_i, b_j\in \B$.
 
 2. Since $\mathcal{O}_{E_\G}$ is a quotient of the quantum Cuntz-Krieger algebra (see \cite[Corollary 3.7]{BHINW}) and the quotient map is clearly gauge action equivarient, we will also obtain a KMS state on the quantum Cuntz-Krieger algebra.
\end{remark}


Let us now concretely obtain a criterion for the existence of $\beta$-KMS states on $\mathcal{O}_{E_\G}$ with respect to the gauge action.
Let $\tau:=\bigoplus_{a=1}^d\lambda_a\Tr(\cdot)$ be a tracial state on $\B$, where $\lambda_a\geq0$ and $\sum_{a=1}^d\lambda_a n_a=1$. Note that for any $X=\oplus_{a=1}^{d}X_a\in\B$ with $X_a\in M_{n_a}$, we have
\begin{align*}
    \tau(X)&=\sum_{a=1}^d\lambda_a\Tr(X_a)=\sum_{a=1}^d\lambda_a\psi(\rho_{a}^{-1}X_a)
=\sum_{a=1}^d\lambda_a\langle\rho_{a}^{-1},X_a\rangle_\psi
\\&=\sum_{a=1}^d\lambda_a\langle\rho_a^{-1}, X\rangle_\psi=\langle\oplus_{a=1}^d\lambda_a\rho_a^{-1}, X\rangle_\psi.
\end{align*}
The left action of $f_{ij}^a$ on $E_\G$ is given by
\[f_{ij}^a\eta=\sum_{k=1}^{n_a}|f_{ik}^a\varepsilon_\G\rangle\langle f_{jk}^a\xi_G|(\eta)=\sum_{k=1}^{n_a}f_{ik}^a\varepsilon_\G\langle f_{jk}^a\varepsilon_\G,\eta\rangle,\;\;\;\;\eta\in E_\G.\]
(see \cite[Theorem 2.12]{BHINW}). Consider the trace $\Tr_\tau$ defined on $\op{B}(E_\G)$ as in \cite[Theorem 1.1]{LN}. By using the same theorem from \cite{LN} and \cite[Theorem 2.5]{BHINW}, we calculate
\begin{align*}
    \Tr_\tau(f_{ij}^a)&=\sum_{k=1}^{n_a}\Tr_\tau(|f_{ik}^a\varepsilon_\G\rangle\langle f_{jk}^a\varepsilon_\G|)=\sum_{k=1}^{n_a}\tau(\langle f_{jk}^a\varepsilon_\G, f_{ik}^a\varepsilon_\G\rangle)\\
    &=\sum_{k=1}^{n_a}\frac{1}{\delta^2}\tau(A((f_{jk}^a)^*f_{ik}^a))=\frac{1}{\delta^2}\sum_{k=1}^{n_a}\frac{1}{\psi(e_{ii}^a)}\tau(A(\delta_{ij}f_{kk}^a))\\
    &=\frac{\delta_{ij}}{\psi(e_{ii}^a)}\frac{1}{\delta^2}\tau(A(\rho_a^{-1}))
\end{align*}
where we  use the expression $\rho_a^{-1}=\sum_{k=1}^{n_a}\psi(e_{kk}^a)^{-1}e_{kk}^a=\sum_{k=1}^{n_a}f_{kk}^a$ as each $\rho_a$ is a diagonal matrix. Thus we get
\[\Tr_\tau(e_{ij}^a)=\delta_{ij}\frac{1}{\delta^2}\tau(A(\rho_a^{-1})).\]
On the other hand, we have
\begin{align*}
    \tau(e_{ij}^a)=\lambda_a\Tr(e_{ij}^a)=\lambda_a\delta_{ij}.
\end{align*}
  Since $\beta$ is a KMS temperature point for a state on  $\mathcal{O}_{E_\G}$ if and only if $\Tr_\tau(be^{-\beta})=\tau(b)$ for all $b\in \B$, the calculations above show that the last condition is equivalent to $\frac{1}{\delta^2}\tau(A(\rho_a^{-1}))=e^\beta\lambda_a,\forall 1\leq a\leq d$ that is,
\begin{align*}
  \frac{1}{\delta^2}\sum_{b=1}^d\lambda_b\langle\rho_b^{-1},A(\rho_a^{-1})\rangle_\psi=e^\beta\lambda_a,\;\;\;\forall 1\leq a\leq d.
\end{align*}
If we consider  the  matrix $D=[D_{ab}]_{1\leq a,b\leq d}$ in $M_d$ given by  $D_{ab}=\langle\rho_b^{-1}, A(\rho_{a}^{-1})\rangle_\psi$, $\lambda\in \C^d$ is the vector given by $\lambda=(\lambda_1,\ldots,\lambda_d)$, and $p_a$ is the canonical projections in $\C^d$, then above condition is equivalent to the following: 
\begin{align*}
     \frac{1}{\delta^2}\langle p_a, D\lambda \rangle_{\mathbb{C}^d}=e^\beta\langle p_a,\lambda\rangle_{\C^d},\;\;\;\forall1\leq a\leq d
\end{align*}
i.e $D\lambda=\delta^2e^\beta\lambda$.
\begin{remark}
 1. Since $A$ is a positive map, $A(\rho_b^{-1})\geq 0$ in $\B$.  As $\langle\rho_b^{-1},A(\rho_a^{-1})\rangle_\psi$ is nothing but the trace of $b$th component of $A(\rho_a^{-1})$, it follows that $\langle\rho_b^{-1},A(\rho_a^{-1})\rangle_\psi\geq 0$ i.e. $D$ is a matrix with non-negative entries.

2. If we demand $\tau=\oplus_{a=1}^d\lambda_a\Tr(\cdot)$ to be faithful, then $\lambda_a>0$ for all $a.$ In this case, the equality  $D\lambda=\delta^2e^\beta\lambda
$ can only occur if $\delta^2 e^\beta=r(D)$, the spectral radius of $D$ (see \cite[Corollary 8.1.30]{HJ}).  In particular, the only possible value of $\beta$ is $\log(\frac{1}{\delta^2}r(D))$. 

 3. If the matrix $D$ is irreducible, then the only possible eigenvectors with non-negative entries correspond to the eigenvalue $r(D)$; in such cases the eigenspace corresponding to $r(D)$  is one-dimensional. Hence, we will have only one choice of $\beta$ and $\lambda_a's$ and hence the $\beta$-KMS state is unique.
\end{remark}

We summarize the above discussion in the following theorem.
\begin{theorem}\label{Thm:KMS}
    Let $\B=\bigoplus_{a=1}^d M_{n_a}$, and let $\G=(\B, A,\psi)$ be a quantum graph. Let $\tau=\bigoplus_{a=1}^d\lambda_a\Tr(\cdot)$ for $\lambda_a\geq 0$ with $\sum_{a=1}^d\lambda_a n_a=1$. Then $\varphi$ is a $\beta$-KMS state on $\mathcal{O}_{E_\G}$ with respect to the gauge action such that $\varphi_{|_\B}=\tau$ iff $\lambda=(\lambda_1,\ldots,\lambda_d)\in\C^d$ is an eigenvector of the matrix $D:=[\langle\rho_a^{-1}, A(\rho_{b}^{-1})\rangle_\psi]$ with the eigenvalue $\delta^2e^\beta$. Moreover, the KMS state is unique if the matrix $D$ is irreducible.
\end{theorem}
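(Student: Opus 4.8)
The plan is to specialize Theorem~\ref{Thm:LN} to the gauge action and to translate its abstract KMS criterion into the concrete eigenvalue equation for $D$. For the gauge action $U_t = e^{it}$, written as $U_t = \exp(itD)$ in the notation of Theorem~\ref{Thm:LN}, the generator is simply the identity operator on $E_\G$, so that $e^{-\beta D} = e^{-\beta}\id$ collapses to a scalar. Consequently the criterion of Theorem~\ref{Thm:LN} reduces to requiring that $\tau := \varphi_{|\B}$ be a trace satisfying $e^{-\beta}\Tr_\tau(b) = \tau(b)$ for every $b \in \B$. Since every tracial state on $\B = \bigoplus_a M_{n_a}$ has the stated form $\tau = \bigoplus_a \lambda_a\Tr$, it suffices to verify this identity on the matrix units $e_{ij}^a$, which is where the actual work lies.

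The heart of the argument is the explicit evaluation of $\Tr_\tau(e_{ij}^a)$. I would use the description of the left action of $f_{ij}^a$ on $E_\G$ from \cite[Theorem 2.12]{BHINW} as a sum of rank-one operators $|f_{ik}^a\varepsilon_\G\rangle\langle f_{jk}^a\varepsilon_\G|$, then apply the definition of $\Tr_\tau$ from \cite[Theorem 1.1]{LN} together with the inner-product formula $\langle b\varepsilon_\G, c\varepsilon_\G\rangle = \tfrac{1}{\delta^2}A(b^*c)$ of \cite[Theorem 2.5]{BHINW}. Simplifying via the multiplication rule $f_{ij}^a f_{rs}^b = \delta_{ab}\delta_{jr}\,f_{is}^a/\psi(e_{jj}^a)$ and using $\rho_a^{-1} = \sum_k f_{kk}^a$, one arrives at $\Tr_\tau(e_{ij}^a) = \delta_{ij}\,\tfrac{1}{\delta^2}\,\tau(A(\rho_a^{-1}))$. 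Comparing with $\tau(e_{ij}^a) = \lambda_a\delta_{ij}$, the KMS condition becomes $\tfrac{1}{\delta^2}\tau(A(\rho_a^{-1})) = e^\beta\lambda_a$ for every $a$.

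Next I would rewrite $\tau$ as an inner product, $\tau(X) = \langle \bigoplus_a \lambda_a\rho_a^{-1}, X\rangle_\psi$, so that $\tau(A(\rho_a^{-1})) = \sum_b \lambda_b\,\langle\rho_b^{-1}, A(\rho_a^{-1})\rangle_\psi$. This exhibits the family of conditions precisely as the eigenvalue equation $D\lambda = \delta^2 e^\beta\lambda$ with the matrix $D$ whose entries are the pairings $\langle\rho_b^{-1}, A(\rho_a^{-1})\rangle_\psi$, establishing the stated equivalence. For the uniqueness claim, I would observe that $D$ has non-negative entries, since $A$ is positive and hence each $\langle\rho_b^{-1}, A(\rho_a^{-1})\rangle_\psi$ is a trace of a positive block, and that a genuine state with faithful $\tau$ forces $\lambda$ to be strictly positive. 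By the Perron--Frobenius theory for irreducible non-negative matrices (\cite[Corollary 8.1.30]{HJ}), the only eigenvalue admitting a positive eigenvector is the spectral radius $r(D)$, whose eigenspace is one-dimensional; together with the normalization $\sum_a \lambda_a n_a = 1$ this pins down $\lambda$, hence forces $\beta = \log(r(D)/\delta^2)$ and determines $\varphi$ uniquely.

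The main obstacle is the bookkeeping in the computation of $\Tr_\tau(e_{ij}^a)$: one must move carefully between the normalized matrix units $f_{ij}^a$, in which the left action and the inner product of \cite{BHINW} take a clean form, and the ordinary units $e_{ij}^a$, in which $\tau$ is diagonal, while tracking the scalar factors $\psi(e_{ii}^a)$ and the powers of $\delta^2$. Once this reduction is carried out correctly, the passage to the eigenvalue equation and the Perron--Frobenius uniqueness conclusion are routine.
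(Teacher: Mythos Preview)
Your proposal is correct and follows essentially the same route as the paper: specialize Theorem~\ref{Thm:LN} to the gauge action, compute $\Tr_\tau(f_{ij}^a)$ via the rank-one description of the left action from \cite[Theorem~2.12]{BHINW} and the inner-product formula from \cite[Theorem~2.5]{BHINW}, rewrite $\tau$ as a $\psi$-inner product to obtain the eigenvalue equation $D\lambda=\delta^2 e^\beta\lambda$, and invoke Perron--Frobenius for uniqueness. One small refinement: for the uniqueness step you need not assume $\tau$ is faithful; for an irreducible non-negative matrix the only \emph{non-negative} eigenvectors (up to scale) already lie in the Perron eigenspace, so the constraint $\lambda_a\geq 0$ suffices.
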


Let us now calculate $\beta$ in some special cases for the KMS states with respect to the gauge action.
\\
\\
1. \textbf{(The classical case):} $\B=\C^d$, $\psi(x)=\frac{x_i}{d}$ for $x=(x_1,\ldots,x_d)\in \C^d$ so that $\psi$ is a $\delta$-form for $\delta^2=d$, and $A:\C^d\to\C^d$ is an adjacency matrix. Let $\langle\cdot,\cdot\rangle_{\C^d}$ denote the usual inner product on $\C^d$ so that $\langle\cdot,\cdot\rangle_{\C^d}=d\langle\cdot,\cdot\rangle_\psi$. For $1\leq a\leq d$, we have $\rho_a=\frac{p_a}{d}$ so that $\rho_a^{-1}=dp_a$ where $\{p_a\}_{1\leq a\leq d}$ are the canonical projections in $\C^d$. Hence
\[\langle\rho_b^{-1},A\rho_a^{-1}\rangle_\psi=\frac{1}{d}\langle\rho_b^{-1},A(\rho_a^{-1})\rangle_{\C^d}=d\langle p_b,A(p_a)\rangle_{\C^d}=dA_{ba}\]
where $A=[A_{ab}]$ is written in the matrix form with the canonical basis of $\C^d$. Thus $D=dA^t$ where $A^t$ denotes the transpose of $A$. 
If $A$ is irreducible (i.e. the graph is connected) then the only possible value of $\beta$ is
\[\beta=\log(\frac{1}{\delta^2}r(D))=\log \frac{1}{d}dr(A^t)=\log r(A)\]
and the KMS state is unique. Moreover, if the graph has no source (recall that a source is a vertex with no edges into it) then $E_\G$ is faithful and the KMS state $\phi$ on the corresponding  Cuntz-Krieger  algebra is given by
\begin{align*}
    \phi(S_{i_1}\cdots S_{i_n}S_{j_m}^*\cdots S_{j_1}^*)&=\delta_{m,n}\frac{1}{d^2}e^{-n\beta}\tau(\langle p_{j_1}\varepsilon_\G\otimes\cdots p_{j_n}\varepsilon_\G, p_{i_1}\varepsilon_\G\otimes\cdots \otimes p_{j_n}\varepsilon_\G\rangle)\\
&=\frac{1}{d^2(r(A))^n}\delta_{m,n}\delta_{i_1,j_1}\cdots\delta_{i_n,j_n}A_{j_2j_1} A_{j_3j_2}\cdots A_{j_nj_{n-1}}\lambda_{j_m} 
\end{align*}
where $(\lambda_1,\cdots,\lambda_d)$ is an eigenvector of $A$ corresponding to the eigenvalue $r(A)$, and $S_i=S(p_i)$ for $1\leq i\leq d$.
\\
\\
2. \textbf{(Complete quantum graph): }$\B=\oplus_{a=1}^d{M_{n_a}}$, $\psi:\B\to\C$ a $\delta$-form and $A(\cdot)=\delta^2\psi(\cdot)1_\B$. Then, for $1\leq a,b\leq d$, we have
\[\langle\rho_b^{-1},A(\rho_a^{-1})\rangle_\psi=\delta^2\langle\rho_b^{-1}, \psi(\rho_a^{-1})1_\B\rangle_\psi=\delta^2\psi(\rho_a^{-1})\psi(\rho_{b}^{-1})=\delta^2 n_an_b\]
so that $D=\delta^2[n_an_b]$. Hence $D$ is an irreducible matrix with $r(D)=\delta^2(\sum_{a=1}^dn_a^2)$. So we get
\[\beta=\log(\sum_{a=1}^dn_a^2)\]
and the KMS state is unique. Clearly $E_\G$ is faithful and the KMS state $\phi$ on the (local) quantum Cuntz-Krieger algebra is determined by
\begin{align*}
    \phi(S(a_1)\cdots S(a_n)S(b_m)^*\cdots S(b_1)^*)=\frac{\delta_{m,n}}{\delta^{2n}}e^{-n\beta}\psi(b_1^*a_1)\cdots\psi(b_n^*a_n).
\end{align*}
for $a_i, b_j\in \B$.
\\
\\
3. (\textbf{Rank-one quantum graph}): $\B=\oplus_{a=1}^d M_{n_a}$, $\psi$  a $\delta$-form and $A(x):=TxT^*$ for some $T\in \B$ such that $\Tr(\rho_a^{-1}T^*T)=\delta^2$ for $1\leq a\leq d$. We have
\[\langle \rho_b^{-1}, A(\rho_a^{-1})\rangle_\psi=\delta_{ab}\psi(\rho_a^{-1}T\rho_a^{-1}T^*)=\delta_{ab}\Tr(\rho_a^{-1}T^*T)=\delta_{ab}\delta^2\]
so that $D=\delta^2 id$. Hence, any vector is an eigenvector of $D$ with the eigenvalue $\delta^2$. So we get $\beta=0$ and each trace on $\B$ gives a tracial KMS state on $\mathcal{O}_{E_{\G}}$. This is not surprising, since by \cite[Proposition 4.10]{BHINW} the local quantum Cuntz-Krieger algebra of a rank-one quantum graph is the same as for the trivial quantum graph. Again $E_\G$ is faithful and  the KMS state $\phi$ is determined as:
\[ \phi(S(a_1)\cdots S(a_n)S(b_m)^*\cdots S(b_1)^*)=\frac{\delta_{m,n}}{\delta^{2n}}\tau(Tb_n^*T\cdots Tb_2Tb_1^*a_1T^*a_2T^*\cdots T^*a_nT^*)\]
for $a_i,b_j\in \B$. 
\\
\\
4. (\textbf{Single matrix block}): $\B=M_n, \tau=\frac{\Tr(\cdot)}{n}$, $\psi=\Tr(\rho\cdot)$ so that $\delta^2=\Tr(\rho^{-1})$. In this case, $D=\langle\rho^{-1}, A(\rho^{-1})\rangle_\psi=\Tr(A(\rho^{-1}))$. The KMS state is always unique with inverse temperature point $\beta=\log \frac{1}{\delta^2}\Tr(A(\rho^{-1}))$.

\subsection{A classical graph with multiple edges}\label{subsec:int} Our next aim is to show that the entries of the matrix $\frac{1}{\delta^2}D$ are always non-negative integers.
For $1\leq b\leq d$ and $1\leq i,j\leq n_b$, write 
\[Af_{ij}^b=\oplus_{a=1}^d X_{ij}^{ab},\;\;\mbox{ for }X_{ij}^{ab}\in M_{n_a}.\]
Set $X^{ab}=[X_{ij}^{ab}]_{1\leq i,j\leq n_b}\in M_{n_b}(M_{n_a})\subseteq M_{n_b}(\B)$.

\begin{lemma}
    The operator $\frac{1}{\delta^2}X^{ab}$ is a projection in $M_{n_b}(\B)$.
\end{lemma}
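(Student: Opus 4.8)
The plan is to prove that $\frac{1}{\delta^2}X^{ab}$ is a \emph{self-adjoint idempotent} in $M_{n_b}(M_{n_a})\subseteq M_{n_b}(\B)$; since a self-adjoint idempotent in a $\cst$-algebra is automatically a projection, and since projections in a sub-$\cst$-algebra remain projections in the ambient one, this is exactly what is required. So the work splits into verifying two properties, each of which I expect to come directly from one of the two defining features of the quantum adjacency operator $A$.

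First I would establish self-adjointness, using that $A$ is $\ast$-preserving. The normalizing scalars defining $f_{ij}^b$ are real, so $(f_{ij}^b)^*=f_{ji}^b$. Applying $A$ and using $A(x^*)=A(x)^*$ together with the fact that the adjoint in $\B=\bigoplus_a M_{n_a}$ acts componentwise gives
\[
\bigoplus_{a=1}^d X_{ji}^{ab}=A(f_{ji}^b)=A\bigl((f_{ij}^b)^*\bigr)=A(f_{ij}^b)^*=\bigoplus_{a=1}^d (X_{ij}^{ab})^*,
\]
hence $X_{ji}^{ab}=(X_{ij}^{ab})^*$ for all $i,j$. Since the adjoint of $X^{ab}=[X_{ij}^{ab}]$ in $M_{n_b}(M_{n_a})$ has $(i,j)$-entry $(X_{ji}^{ab})^*$, this says precisely that $(X^{ab})^*=X^{ab}$.

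The heart of the argument is idempotency, and here I would exploit the quadratic relation $m(A\otimes A)m^*=\delta^2 A$. Evaluating both sides on $f_{ij}^b$ and substituting the coproduct formula $m^*(f_{ij}^b)=\sum_{k=1}^{n_b}f_{ik}^b\otimes f_{kj}^b$ yields
\[
\sum_{k=1}^{n_b}(Af_{ik}^b)(Af_{kj}^b)=\delta^2\,Af_{ij}^b.
\]
Reading off the $a$-th block of $\B$ — where multiplication is componentwise across the summands — the left-hand side becomes $\sum_k X_{ik}^{ab}X_{kj}^{ab}$ and the right-hand side $\delta^2 X_{ij}^{ab}$. But $\sum_k X_{ik}^{ab}X_{kj}^{ab}$ is exactly the $(i,j)$-entry of the matrix square $(X^{ab})^2$ computed with matricial multiplication over $M_{n_a}$. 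Therefore $(X^{ab})^2=\delta^2 X^{ab}$, i.e. $\left(\tfrac{1}{\delta^2}X^{ab}\right)^2=\tfrac{1}{\delta^2}X^{ab}$, as desired.

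I do not anticipate a genuine obstacle here: once the indexing is fixed, the statement is a faithful translation of the two defining properties of $A$ (complete positivity, equivalently $\ast$-preservation; and the quadratic relation) into the two defining properties of a projection (self-adjointness and idempotency). The only point demanding real care is bookkeeping the two different multiplications in play, namely the componentwise product in the direct sum $\bigoplus_a M_{n_a}$ used to extract each block, versus the matricial product over the fixed block $M_{n_a}$ that reassembles $\sum_k X_{ik}^{ab}X_{kj}^{ab}$ into the square $(X^{ab})^2$.
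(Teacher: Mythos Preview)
Your proposal is correct and follows essentially the same approach as the paper: self-adjointness is deduced from the $\ast$-preserving property of $A$ via $(f_{ij}^b)^*=f_{ji}^b$, and idempotency from the relation $m(A\otimes A)m^*=\delta^2 A$ applied to $f_{ij}^b$ using the coproduct formula, then read off blockwise. The paper's proof is organized identically, with no additional ideas beyond what you have outlined.
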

\begin{proof}
Since $A$ is $\ast$-preserving, we have
\[Af_{ij}^{b}=A((f_{ji}^{b})^*)=(Af_{ji}^{b})^*,\]
that is
\[X_{ij}^{ab}=(X_{ji}^{ab})^*,\;\;\;1\leq i,j\leq n_b\]
so that $X^{ab}=(X^{ab})^*$. We will now show that $\delta^2 X^{ab}=(X^{ab})^2$. We first note that
\begin{align*}
    \delta^2 A(f_{{ij}}^{b})=m(A\otimes A)m^*(f_{ij}^{b})=m(A\otimes A)\left(\sum_{k=1}^{n_b}f_{ik}^b\otimes f_{kj}^b\right)=\sum_{k=1}^{n_b}A(f_{ik}^b)A(f_{kj}^b)
\end{align*}
so we get
\begin{align*}
    &\delta^2\bigoplus_{a=1}^dX_{ij}^{ab}=\bigoplus_{a=1}^d\sum_{k=1}^{n_b}X_{ik}^{ab}X_{kj}^{ab}    
    \end{align*}
    that is
     \begin{align*}
    \delta^2 X_{ij}^{ab}=\sum_{k=1}^{n_b}X_{ik}^{ab}X_{kj}^{ab}.
    \end{align*}
   This shows that  $\delta^2 X^{ab}= (X_{ab})^2$
    so that $\frac{1}{\delta^2}X^{ab}=(\frac{1}{\delta^2}X^{ab})^2$; hence $\frac{1}{\delta^2}X^{ab}$ is a projection.
    \end{proof}

Since $\frac{1}{\delta^2}X^{ab}$ is a projection in $M_{n_b}(\B)$, it follows that its (non-normalized) trace is a natural number. We thus have the following:
\begin{align*}
   \frac{1}{\delta^2}\langle\rho_a^{-1},A(\rho_b^{-1})\rangle_\psi&= \frac{1}{\delta^2}\sum_{k=1}^{n_b}\langle\rho_a^{-1}, A(f_{kk}^b)\rangle_\psi\\
   &= \frac{1}{\delta^2}\sum_{k=1}^{n_b}\langle\rho_a^{-1}, \bigoplus_{c=1}^dX_{kk}^{cb}\rangle_\psi\\
   &= \frac{1}{\delta^2}\sum_{k=1}^{n_b}\langle\rho_a^{-1}, X_{kk}^{ab}\rangle_\psi\\
   &= \frac{1}{\delta^2}\sum_{k=1}^{n_b}\Tr_{M_{n_a}}(X_{kk}^{ab}).
\end{align*}
The last quantity is nothing but the sum of the trace of the diagonal entries of the matrix $ \frac{1}{\delta^2}X^{ab}$; hence it must be a positive integer, as was claimed. 

The above calculations  mean that the task of finding the KMS states reduces to studying classical graphs with multiple edges. In the next subsection we will introduce an orthonormal basis of the edge correspondence, which will help us in reproving the results about KMS states and will provide an interpretation to the fact that the entries of $\frac{1}{\delta^2} D$ are integers.

\subsection{An orthonormal basis of the edge correspondence}\label{subsec:orth}

Let us start with a single matrix block, i.e. $\B \simeq M_n$. We have a state defined by $\psi(x):= \op{Tr}(\rho x)$ and we assume that the density matrix $\rho$ is diagonal. Recall that $m^{\ast}(e_{ij}) = \sum_{k=1}^n e_{ik}\rho^{-1} \otimes e_{kj}$, so a quantum adjacency matrix satisfies
\[
\sum_{k}A(e_{ik} \rho^{-1}) A(e_{kj}) = \delta^2 A(e_{ij}),
\]
where $\delta^2= \op{Tr}(\rho^{-1})$. If $A$ has a Kraus decomposition $Ax = \sum_{r} V_{r} x V_{r}^{\ast}$ for some $V_r\in M_n$ then the condition becomes
\[
\sum_{r,q,k} V_{r} e_{ik} \rho^{-1} V_{r}^{\ast} V_{q} e_{kj} V_{q}^{\ast} = \delta^2 \sum_{r} V_{r} e_{ij} V_{r}^{\ast}.
\]
If we note that $\sum_{k} e_{ik} \rho^{-1} V_{r}^{\ast} V_{q} e_{kj} = \op{Tr}(\rho^{-1} V_{r}^{\ast} V_{q}) e_{ij}$, then we see that the condition $\op{Tr}(\rho^{-1} V_{r}^{\ast} V_{q}) = \delta^2 \delta_{rq}$ is sufficient to satisfy the above equality. We will now explain that one can always choose Kraus operators to satisfy this orthogonality; from now on we will always choose them to do so.
\begin{proposition}[{\cite[Proposition 3.30]{Was}}]\label{Prop:qadjformula}
Let $V \in M_n$ be a subspace and let $(X_1, \dots, X_d)$ be an orthonormal basis of $V$ with respect to the KMS inner product induced by $\psi^{-1}$, i.e. we have $\op{Tr}(X_{i}^{\ast} \rho^{-\frac{1}{2}} X_{j} \rho^{-\frac{1}{2}}) = \delta^2 \delta_{ij}$. Then the corresponding quantum adjacency matrix is given by
\[
A(x):= \sum_{i=1}^{d} \rho^{-\frac{1}{4}} X_i \rho^{\frac{1}{4}} x \rho^{\frac{1}{4}} X_{i}^{\ast}  \rho^{-\frac{1}{4}}.
\]
The Kraus operators $V_{i}:= \rho^{-\frac{1}{4}} X_i \rho^{\frac{1}{4}} $ satisfy $\op{Tr}(\rho^{-1} V_{i}^{\ast} V_{j}) = \delta^2 \delta_{ij}$.

An analogous statement holds if $M_n$ is replaced by a multi-matrix algebra.
\end{proposition}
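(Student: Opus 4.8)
The plan is to separate the proposition into its two assertions and to lean on the reduction already carried out just before its statement. Concretely, I would (i) check that the operators $V_i=\rho^{-1/4}X_i\rho^{1/4}$ are orthonormal for the weighted pairing $\langle W,W'\rangle_\rho:=\op{Tr}(\rho^{-1}W^{*}W')$, and (ii) deduce from this that $A(x)=\sum_i V_ixV_i^{*}$ is a quantum adjacency operator. Assertion (ii) is almost immediate once (i) holds, because the computation preceding the proposition shows that the relation $m(A\otimes A)m^{*}=\delta^2A$ for a map in Kraus form is equivalent to the orthogonality of its Kraus operators.

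For (i) the only input is the cyclicity of the trace. Since $\rho$ is positive one has $V_i^{*}=\rho^{1/4}X_i^{*}\rho^{-1/4}$, so $V_i^{*}V_j=\rho^{1/4}X_i^{*}\rho^{-1/2}X_j\rho^{1/4}$, and therefore
\[
\op{Tr}(\rho^{-1}V_i^{*}V_j)=\op{Tr}(\rho^{-3/4}X_i^{*}\rho^{-1/2}X_j\rho^{1/4})=\op{Tr}(X_i^{*}\rho^{-1/2}X_j\rho^{-1/2})=\delta^2\delta_{ij},
\]
where the last equality is exactly the hypothesis that $(X_i)$ is orthonormal for the KMS inner product induced by $\psi^{-1}$. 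This settles the second displayed claim of the proposition.

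To finish (ii) I would note first that $A(x)=\sum_iV_ixV_i^{*}$ is manifestly completely positive, hence $\ast$-preserving, which is one of the two defining properties. For the other I would substitute the identity $\sum_k e_{ik}\rho^{-1}V_r^{*}V_qe_{kj}=\op{Tr}(\rho^{-1}V_r^{*}V_q)e_{ij}$ into $m(A\otimes A)m^{*}(e_{ij})=\sum_{r,q,k}V_re_{ik}\rho^{-1}V_r^{*}V_qe_{kj}V_q^{*}$; the sum collapses to $\sum_{r,q}\op{Tr}(\rho^{-1}V_r^{*}V_q)V_re_{ij}V_q^{*}$, and the orthogonality from (i) turns this into $\delta^2\sum_rV_re_{ij}V_r^{*}=\delta^2A(e_{ij})$. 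Since the matrix units span $M_n$ and the relation is linear, this yields $m(A\otimes A)m^{*}=\delta^2A$, so $A$ is a quantum adjacency operator.

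To justify the word ``corresponding'' --- that every quantum adjacency operator arises this way, equivalently that the Kraus operators of any such operator can always be chosen orthogonal --- I would argue in reverse. Starting from a minimal Kraus decomposition $A(x)=\sum_rW_rxW_r^{*}$ with linearly independent $W_r$, I would diagonalise the positive-definite Gram matrix $[\op{Tr}(\rho^{-1}W_r^{*}W_q)]$ using the unitary freedom in Kraus decompositions; the adjacency relation, rewritten as above, then reads $\sum_s(\mu_s-\delta^2)W_s'x(W_s')^{*}=0$ for the new Kraus operators $W_s'$, and since the rank-one Choi operators of linearly independent operators are themselves linearly independent, every eigenvalue $\mu_s$ must equal $\delta^2$. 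Setting $X_s:=\rho^{1/4}W_s'\rho^{-1/4}$ then recovers a KMS-orthonormal basis of the associated subspace, and the formula reproduces the original $A$. The multi-matrix case requires no new idea: one repeats both computations blockwise with $\rho=\bigoplus_a\rho_a$ and $m^{*}(f_{ij}^a)=\sum_kf_{ik}^a\otimes f_{kj}^a$. I expect the main obstacle to be precisely this converse step together with its multi-matrix bookkeeping, where the Kraus operators now carry a pair of block indices and may be rectangular, so that the several densities $\rho_a$ must be tracked carefully through the cyclic-trace manipulations; the forward computations themselves are routine.
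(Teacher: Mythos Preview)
Your proposal is correct and, for the orthogonality assertion on the $V_i$'s, it is exactly the paper's argument: both compute $\op{Tr}(\rho^{-1}V_i^{*}V_j)$ by inserting $V_i=\rho^{-1/4}X_i\rho^{1/4}$ and using cyclicity of the trace to reduce to the KMS orthonormality of the $X_i$'s.

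The difference is in scope. The paper's proof is one sentence: it simply invokes \cite[Proposition~3.30]{Was} for the fact that the displayed formula defines a quantum adjacency matrix, and then verifies only the Kraus orthogonality. You instead give a self-contained verification of $m(A\otimes A)m^{*}=\delta^2A$ using the Kraus orthogonality and the identity $\sum_k e_{ik}\rho^{-1}V_r^{*}V_qe_{kj}=\op{Tr}(\rho^{-1}V_r^{*}V_q)e_{ij}$ that was recorded just before the proposition, and you also sketch the converse (every quantum adjacency operator admits such a Kraus decomposition) via diagonalising the Gram matrix and exploiting linear independence of the rank-one Choi operators. Both additions are correct; the paper simply outsources them to the cited reference. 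Your approach buys self-containment at the cost of a little more writing; the paper's buys brevity at the cost of an external dependency.
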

\begin{proof}
The formula for the quantum adjacency matrix is taken from \cite[Proposition 3.30]{Was}, the only difference being that $\delta=1$ in that work. We just have to check that the Kraus operators satisfy the appropriate orthogonality condition. We have $X_i = \rho^{\frac{1}{4}} V_i \rho^{-\frac{1}{4}}$, so the KMS orthogonality of the $X_i$'s translates to
\[
\op{Tr}(\rho^{-\frac{1}{4}} V_i^{\ast} \rho^{\frac{1}{4}} \rho^{-\frac{1}{2}} \rho^{\frac{1}{4}} V_j \rho^{-\frac{1}{4}} \rho^{-\frac{1}{2}}) = \delta^2 \delta_{ij}. 
\]
Now note that the left-hand side is equal to $\op{Tr}(\rho^{-1} V_{i}^{\ast} V_{j})$ to conclude.
\end{proof}

We fix the quantum graph $\mathcal{G}=(M_n, A,\psi)$ and we will denote its quantum edge indicator as $\varepsilon$. It is equal to
\[
\varepsilon = \frac{1}{\delta^2} \sum_{ij,r} e_{ij}\rho^{-1} \otimes V_{r} e_{ji} V_{r}^{\ast}.
\]
The $M_n$-valued inner product becomes $\langle a\otimes b, c\otimes d\rangle:= \psi(a^{\ast}c) b^{\ast}d$ and we also get $\langle x\cdot \varepsilon, y\cdot \varepsilon \rangle = \frac{1}{\delta^2} A(x^{\ast}y)$ (see \cite[Theorem 2.5 ]{BHINW}). By a direct computation we check that
\[
\varepsilon = \frac{1}{\delta^2} \sum_{i,j,r} V_{r}^{\ast} e_{ij} \rho^{-1} \otimes V_{r} e_{ji}.
\]
Note that elements of the form $e_{st}\varepsilon = \frac{1}{\delta^2} \sum_{r,j} e_{sj}\rho^{-1} \otimes V_{r} e_{jt} V_{r}^{\ast}$ span the edge correspondence as a right $M_n$-module. For $1\leq a\leq n$ and appropriate $r$ we define
\[
\psi_{ar}:= \sum_{k} e_{ak} V_{r} \rho^{-1} \varepsilon e_{k1} = \frac{1}{\delta^2}\sum_{k,i,j,q} e_{ak} V_{r} \rho^{-1} V_{q}^{\ast} e_{ij} \rho^{-1} \otimes V_{q} e_{ji}e_{k1} \in E_\G. 
\]
Because $e_{ji} e_{k1} = \delta_{ik} e_{j1}$, the left leg of the tensor product becomes $\op{Tr}(V_r \rho^{-1} V_{q}^{\ast}) e_{aj} \rho^{-1} = \delta^2 \delta_{rq} e_{aj} \rho^{-1}$. It follows that 
\[
\psi_{ar} = \sum_{j} e_{aj}\rho^{-1} \otimes V_{r} e_{j1}.
\]
Let us check that these elements span the edge correspondence as a right $M_n$-module. We have
\[
\sum_{r} \psi_{ar} e_{1t} V_{r}^{\ast} = \delta^2 e_{at} \varepsilon,
\]
and we know that these elements span the edge correspondence. We will now check that the set $(\psi_{ar})_{a,r}$ is orthogonal:
\begin{align*}
\langle \psi_{a_1 r_1}, \psi_{a_2 r_2}\rangle &= \sum_{j_1, j_2} \psi(\rho^{-1} e_{j_1 a_1} e_{a_2 j_2} \rho^{-1})e_{1j_1} V_{r_1}^{\ast} V_{r_2} e_{j_2 1} \\
&= \sum_{j_1, j_2} \delta_{a_1 a_2} \delta_{j_1 j_2} (\rho^{-1})_{j_1} e_{1j_1} V_{r_1}^{\ast} V_{r_2} e_{j_2 1}, 
\end{align*}
where we used the fact that $\rho$ is a diagonal matrix. Using it once again, we note that $e_{1j} (\rho^{-1})_{j} = e_{1j} \rho^{-1}$, hence
\[
\langle \psi_{a_1 r_1}, \psi_{a_2 r_2}\rangle= \delta_{a_1 a_2} e_{11} \op{Tr}(\rho^{-1} V_{r_1}^{\ast} V_{r_2}) = \delta^2 \delta_{a_1 a_2} \delta_{r_1 r_2} e_{11}.
\]
 If we consider $\phi_{ar}:= \frac{1}{\delta} \psi_{ar}$, then we get an orthogonal set such that $\langle \phi_{ar}, \phi_{ar}\rangle = e_{11}$ is a projection, so we get a (quasi-)orthonormal basis of the edge correspondence. It follows from \cite[Theorem 1.1.]{LN} that 
 \[\op{Tr}_{\tau} (x) = \sum_{a,r} \tau(\langle \phi_{ar}, x \phi_{ar}\rangle),\] where $\tau$ is the normalized trace on $M_n$. We can compute this expression explicitly:
\begin{align*}
\sum_{a,r} \langle \phi_{ar}, x\phi_{ar}\rangle &= \frac{1}{\delta^2}\sum_{j_1, j_2, a, r} \op{Tr}(\rho \rho^{-1} e_{j_1 a} x e_{a j_2} \rho^{-1}) e_{1 j_1} V_{r}^{\ast} V_{r} e_{j_2 1} \\
&= \op{Tr}(x) \frac{1}{\delta^2} \sum_{j,r} e_{11} \op{Tr}(\rho^{-1} V_{r}^{\ast} V_{r}) = \op{Tr}(x) e_{11} \op{dim}\op{span}\{V_r\}.
\end{align*} 
After applying the normalized trace we obtain 
\[
\op{Tr}_{\tau}(x) = \tau(x) \op{dim}\op{span}\{V_r\}.
\]
Note that we have $\langle \rho^{-1}, A(\rho^{-1})\rangle = \sum_{r} \op{Tr}(\rho \rho^{-1} V_{r} \rho^{-1} V_{r}^{\ast}) = \sum_{r} \op{Tr}(\rho^{-1} V_{r}^{\ast} V_{r}) = \delta^2 \op{dim}\op{span}\{V_r\}$, so we can easily express this dimension using the quantum adjacency matrix and we see how it is related to the matrix $D$.

We will now consider the case where $\B$ is a multi-matrix algebra. We have $\B:= \bigoplus_{a=1}^{d} M_{n_{a}}$, so the edge correspondence is a sub-bimodule of $\B\otimes \B = \bigoplus_{a,b=1}^{d} M_{n_{a}} \otimes M_{n_{b}}$. Because of that, the edge correspondence $E_\G$ naturally splits into a direct sum $E_{ab}$ of $M_{n_{a}}$-$M_{n_b}$-bimodules that are mutually orthogonal. We will use that to construct an orthonormal basis of $E_\G$. Let us denote by $A_{ba}$ the part of the quantum adjacency matrix mapping $M_{n_{a}}$ to $M_{n_b}$, i.e. we restrict $A$ to $M_{n_a}$ and then project onto $M_{n_b}$. We have a Kraus decomposition $A_{ba}(x) := \sum_{r} V_{ba}^{r} x (V_{ba}^{r})^{\ast}$, where the Kraus operators $V_{ba}^r$ are $n_b\times n_a$ matrices and satisfy 
\[\op{Tr}(\rho_{a}^{-1} (V_{ba}^{r})^{\ast} V_{ba}^{q}) = \op{Tr}(V_{ba}^{q} \rho_{a}^{-1}(V_{ba}^{r})^{\ast} )=\delta^2 \delta_{rq}.\]  The edge indicator is given by
\begin{align*}
    \varepsilon_\G = \frac{1}{\delta^2} \sum_{a,b,i,j,r} e_{ij}^a\rho_a^{-1} \otimes V_{ba}^r e_{ji}^a (V_{ba}^r)^{\ast}
\end{align*}
which can further be written as follows:
\begin{align*}
     \varepsilon_\G = \frac{1}{\delta^2} \sum_{a,b,i,j,r} (V_{ba}^r)^*e_{ij}^{ba}\rho_a^{-1} \otimes V_{ba}^r e_{ji}^{ab}.
\end{align*}
where $e_{k\ell}^{ab}$ denotes the matrix unit of $n_a\times n_b$ matrices with $1\leq k\leq n_{a}$ and $1\leq \ell \leq n_{b}$.
One can obtain the above expression by writing $V_{ba}^r=\sum_{k,\ell} (V_{ba}^r)_{k\ell}e_{k\ell}^{ba}$ and noting that $e_{ji}^{ab}e_{k\ell}^{bc}=\delta_{ik}e^{ac}_{j\ell}$.
We define elements
\[
\phi_{ab}^{ir}:= \frac{1}{\delta} \sum_{k} e_{ik}^{a} \rho_{a}^{-1} \otimes V_{ba}^{r} e_{k1}^{ab},
\]
 so that $\phi_{ab}^{ir} \in M_{n_a} \otimes M_{n_b}$. As above, one can show that $\phi_{ab}^{ir}\in E_\G$. Indeed, a direct calculation using $\Tr(V_{ba}^q\rho_a^{-1}(V_{ba}^r)^*)=\delta^2\delta_{rq}$ and the  expressions above for $\varepsilon_\G$ shows that
\begin{align*}
   \phi_{ab}^{ir}=  \frac{1}{\delta}\sum_{k} e_{
   ik}^{ab} V_{ba}^r\rho_a^{-1}\varepsilon_\G e_{k1}^b\in E_\G.
\end{align*}
It is clear that if the indices $a$ or $b$ are different then we get orthogonal elements, because the products are zero, so it suffices to see what happens in a given block. We compute
\begin{align*}
\langle \phi_{ab}^{i_1 r_1}, \phi_{ab}^{i_2 r_2}\rangle &= \frac{1}{\delta^2} \sum_{k_1, k_2} \op{Tr}(\rho_{a} \rho_{a}^{-1} e_{k_1 i_1} e_{i_2 k_2} \rho_{a}^{-1}) e_{1k_{1}}^{ba} (V_{ba}^{r_1})^{\ast} V_{ba}^{r_2} e_{k_{2}1} \\
&= \delta_{i_1 i_2} \delta_{r_1 r_2} e_{11}^{b}, 
\end{align*}
where the computation is exactly the same as in the single block case. This leads us to the next proposition.

\begin{proposition}
Let $\G=(\B, A, \psi)$ be a quantum graph and let $E_{\G}$ be its edge correspondence. Suppose that $\B\simeq \bigoplus_{a=1}^{d} M_{n_{a}}$ and for each pair $(a,b) \in \{1,\ldots,d\}^2$ let $(V_{ba}^{r})_{r}$ be Kraus operators of $A_{ba}: M_{n_{a}} \to M_{n_b}$. Define $\phi_{ab}^{ir}:= \frac{1}{\delta}\sum_{k} e_{ik}^{a} \rho_{a}^{-1}\otimes V_{ba}^{r}e_{k1}^{ab}$. Then the family $(\phi_{ab}^{ir})_{a,b,i,r}$ is an orthonormal basis of the edge correspondence $E_{\G}$. 
\end{proposition}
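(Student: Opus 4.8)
The plan is to verify the two defining properties of an orthonormal basis of a Hilbert $C^{\ast}$-module, namely orthonormality and completeness. The orthonormality has essentially been settled by the computations preceding the statement: for indices in different blocks the relevant products vanish, so $\langle \phi_{ab}^{i_1 r_1}, \phi_{a'b'}^{i_2 r_2}\rangle = 0$ unless $a=a'$ and $b=b'$, while inside a fixed block the direct computation gives $\langle \phi_{ab}^{i_1 r_1}, \phi_{ab}^{i_2 r_2}\rangle = \delta_{i_1 i_2}\delta_{r_1 r_2} e_{11}^b$. In particular each self inner product $\langle \phi_{ab}^{ir}, \phi_{ab}^{ir}\rangle = e_{11}^b$ is a projection, and one checks directly (using $e_{k1}^{ab} e_{11}^b = e_{k1}^{ab}$) that $\phi_{ab}^{ir} e_{11}^b = \phi_{ab}^{ir}$, so the family is a genuine orthonormal system. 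It therefore remains to prove completeness, which is the heart of the argument.

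For completeness I would first reduce to a single block. Since $E_\G = \bigoplus_{a,b} E_{ab}$ is an orthogonal direct sum of $M_{n_a}$-$M_{n_b}$-bimodules and $\phi_{ab}^{ir}\in E_{ab}$, it suffices to show that for each fixed pair $(a,b)$ the family $(\phi_{ab}^{ir})_{i,r}$ generates $E_{ab}$ as a right $M_{n_b}$-module. Because $E_{ab}$ is spanned, as a right $M_{n_b}$-module, by the $(a,b)$-components of the generators $e_{it}^a \varepsilon_\G$, it is enough to produce each such component as a right combination of the $\phi_{ab}^{ir}$. This is the multi-matrix analog of the single-block identity $\sum_r \psi_{ar} e_{1t} V_r^{\ast} = \delta^2 e_{at}\varepsilon$. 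Using $e_{k1}^{ab} e_{1t}^{ba} = e_{kt}^a$ together with the first expression for $\varepsilon_\G$, a direct computation yields
\[
\sum_r \phi_{ab}^{ir}\, e_{1t}^{ba}(V_{ba}^r)^{\ast} = \delta\,(e_{it}^a\varepsilon_\G)_{ab},
\]
where $(e_{it}^a\varepsilon_\G)_{ab}$ denotes the $(a,b)$-component of $e_{it}^a\varepsilon_\G$ and the right multiplier $e_{1t}^{ba}(V_{ba}^r)^{\ast}$ indeed lies in $M_{n_b}$. As $i$ and $t$ vary these components span $E_{ab}$, so every block generator sits inside the right submodule generated by the $\phi_{ab}^{ir}$, which proves completeness.

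Finally I would record the standard consequence that, in this finite-dimensional setting, orthonormality together with spanning forces the reconstruction identity $\xi = \sum_{a,b,i,r} \phi_{ab}^{ir}\langle \phi_{ab}^{ir},\xi\rangle$ for all $\xi\in E_\G$: writing a spanning element as $\xi=\sum \phi_{ab}^{ir} b_{ab}^{ir}$, the orthonormality relations and $\phi_{ab}^{ir} e_{11}^b = \phi_{ab}^{ir}$ give $\phi_{ab}^{ir}\langle \phi_{ab}^{ir},\xi\rangle = \phi_{ab}^{ir} b_{ab}^{ir}$, and summing recovers $\xi$. This is exactly the statement that $(\phi_{ab}^{ir})$ is an orthonormal basis, and it is the input needed for the trace formula from \cite[Theorem 1.1]{LN} used afterwards. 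I expect the main obstacle to be the index bookkeeping in the spanning step: keeping track of the rectangular matrix units $e_{k\ell}^{ab}$, the $n_b\times n_a$ Kraus operators $V_{ba}^r$, and their adjoints while isolating the correct right multiplier requires care, whereas the orthonormality relations are routine once the Kraus normalization $\op{Tr}(\rho_a^{-1}(V_{ba}^r)^{\ast} V_{ba}^q) = \delta^2\delta_{rq}$ is in hand.
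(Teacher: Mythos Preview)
Your proposal is correct and follows the same route as the paper: orthonormality is taken from the computations immediately preceding the proposition, and completeness is obtained from the spanning identity analogous to the single-block relation $\sum_r \psi_{ar} e_{1t} V_r^{\ast} = \delta^2 e_{at}\varepsilon$. The paper leaves the multi-matrix spanning step implicit (``As above''), whereas you spell out the identity $\sum_r \phi_{ab}^{ir}\, e_{1t}^{ba}(V_{ba}^{r})^{\ast} = \delta\,(e_{it}^{a}\varepsilon_\G)_{ab}$ and the reconstruction formula, which is a welcome clarification but not a different argument.
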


We can now use this orthonormal basis to find KMS states. Any tracial state on $\B$ is given by $\tau_{\bm{\lambda}}:= \bigoplus_{a=1}^{d} \lambda_{a} \Tr_{n_a}$, where $\bm{\lambda}:= (\lambda_1, \dots, \lambda_n)$ satisfies $\sum_{a=1}^d \lambda_a n_a = 1$. For any $x=\oplus_{a=1}^d x_a\in\B$  we have 
\begin{align*}
\op{Tr}_{\tau_{\bm{\lambda}}}(x) &= \sum_{a,b,i,r}\tau_{\bm{\lambda}} (\langle \phi_{ab}^{ir}, x\phi_{ab}^{ir}\rangle) \\
&= \frac{1}{\delta^2} \sum_{k_1, k_2, a, b, i, r} \op{Tr}(\rho_{a} \rho_{a}^{-1} e_{k_1 i}^{a} x e_{i k_2} \rho_{a}^{-1})\tau_{\bm{\lambda}} (e_{1 k_1}^{ba} (V_{ba}^{r})^{\ast} V_{ba}^{r} e_{k_2 1}^{ab}) \\
&= \sum_{a,b} \op{Tr}_{n_a}(x_{a}) \op{dim}\op{span}\{V_{ba}^{r}: r\} \lambda_b
\end{align*}
The KMS condition becomes
\[
\sum_{a,b} \op{Tr}_{n_a}(x_{a}) \op{dim}\op{span}\{V_{ba}^{r}:r\} \lambda_b = e^{\beta} \sum_{a} \lambda_{a} \op{Tr}_{n_a}(x_{a})
\]
Given the fact that the numbers $\op{Tr}_{n_{a}} (x_{a})$ are arbitrary, we conclude that the vector $\bm{\lambda}$ is a (non-negative) eigenvector of the matrix $[T_{ab}]_{1\leq a,b\leq d}$ where $T_{ab}:= \op{dim}\op{span}\{V_{ba}^{r}\}$ with the eigenvalue $e^{\beta}$. We also note that $T_{ab} = \frac{1}{\delta^2} \langle \rho_{b}^{-1}, A\rho_{a}^{-1}\rangle = \frac{1}{\delta^2} D_{ab}$, which shows why the entries of $\frac{1}{\delta^2} D$ are integers. This reproves Theorem \ref{Thm:KMS} and provides an interpretation for the calculations carried out in Subsection \ref{subsec:int}.

\begin{example}
We can also handle KMS states for more general actions. Since the edge correspondence $E_{\G}$ naturally splits into a direct sum $\oplus_{a,b=1}^d E_{ab}$ of orthogonal $M_{n_a}$-$M_{n_b}$ bimodules, for each array of numbers $(N_{ab})_{a,b}$ with $N_{ab}>1$ we can consider the action $\Phi_t$ on $E_{\G}$, where $\Phi_t$ acts by multiplication by $N_{ab}^{it}$ on the $(a,b)$ component $E_{ab}$. The computations are very similar to the standard gauge action and the condition we find is that
\[
\sum_{b} T_{ab} N_{ab}^{-\beta}\lambda_{b} = \lambda_{a},
\]
i.e. the vector $(\lambda_{1},\dots, \lambda_d)$ is an eigenvector of the matrix $(T_{ab} N_{ab}^{-\beta})_{a,b}$ with eigenvalue $1$.

\end{example}

\section{KMS states (non-tracial cases)}

In the non-tracial case one can use \cite[Theorem 3.2 and Theorem 3.5]{LN} to find a condition for KMS states. Once again there is a procedure to induce a weight $\kappa_{\varphi}$ on $\B(X)$ from a state $\varphi$ on $\B$ and the KMS condition is that $\kappa_{\varphi}$ restricted to $\B$ (where $\B$ acts on $X$ from the left) is equal to $\varphi$. Because we have a formula for the left action of $\B$ as compact operators on $X$, one can explicitly write down a condition for KMS states and in some simple cases we will use it.

Once again, let $\G=(\B,A,\psi)$ be a quantum graph and let $E_{\G}$ be its edge correspondence. We define the isometry group on $E_{\G}$ as $U_t := e^{it} (\sigma_{-t} \otimes \sigma_{-t})$, where $\sigma_{t}$ is the modular group of $\psi$. Note that  we added the minus sign so that $\psi$ is a KMS state with $\beta=1$, not $\beta=-1$. Since we want to assume that $E_{\G}$ is preserved by $U_{t}$, we will assume that $A$ commutes with the modular group. Suppose now that $\varphi$ is a $\beta$-KMS state for the action $\sigma_{-t}$ on $\B$. Consider first the case $\B \simeq M_n$. Then $\psi(x) =\op{Tr}(\rho x)$ and $\varphi(x) = \op{Tr}(\sigma x)$. The $(\sigma_{-t},\beta)$-KMS condition for $\varphi$ means that for all $a,b\in M_n$ we have
\[
\op{Tr}(\sigma a b) = \op{Tr}(\sigma b \rho^{\beta} a \rho^{-\beta}).
\]
By traciality this is equivalent to
\[
\op{Tr}(\sigma a b) = \op{Tr}(\rho^{\beta} a \rho^{-\beta} \sigma b),
 \]
 so $\sigma a = \rho^{\beta} a \rho^{-\beta}\sigma$, as it happens for all matrices $b$. It follows that $\sigma$ is a scalar multiple of $\rho^{\beta}$; the scalar is unique as $\sigma$ is positive and of trace one. It follows that in the case of a multimatrix algebra on each block we have only one choice for the state and we can just vary the weights, just like in the tracial case. So any $\beta$-KMS $\varphi$ on $\B$ is of the form $\bigoplus_{a=1}^{d} \lambda_{a} \op{Tr}(\rho_{a}^{\beta} \cdot)$, where $\sum_{a=1}^{d} \lambda_a \op{Tr}(\rho_a^{\beta}) = 1$. Fix such a $\varphi$ and look at the induced functional $\kappa_{\varphi}$ on $B(E_\G)$, which on rank one operators is given by $\kappa_{\varphi}(|\xi\rangle\langle \eta|) = \varphi(\langle U_{\frac{i\beta}{2}}\eta, U_{\frac{i \beta}{2}} \xi\rangle)$. By \cite[Theorem  2.5 and Theorem 2.12]{BHINW} we obtain
 \begin{align*}
\kappa_{\varphi}(f_{ij}^{a}) &= \sum_{k=1}^{n_{a}} \varphi(\langle U_{\frac{i\beta}{2}} f_{jk}^{a}\varepsilon_{\G}, U_{\frac{i\beta}{2}}f_{ik}\varepsilon_{\G}\rangle) \\
&= \sum_{k} e^{-\beta} \varphi(\langle \sigma_{-\frac{i\beta}{2}}(f_{jk}^{a}) \varepsilon_{\G}, \sigma_{-\frac{i \beta}{2}}(f_{ik}^{a}) \varepsilon_{\G}\rangle) \\
&= \frac{1}{\delta^2}e^{-\beta} \sum_{k} \varphi(A( (\sigma_{-\frac{i\beta}{2}}(f_{jk}^{a}))^{\ast} \sigma_{-\frac{i \beta}{2}}(f_{ik}^{a}))).
 \end{align*}
We assume that the density matrix of $\psi$ is diagonal, hence we can compute 
\[\sigma_{-\frac{i\beta}{2}}(f_{jk}^{a}) = \rho^{\frac{\beta}{2}}f_{jk}^{a}\rho^{-\frac{\beta}{2}}=(\psi(e_{jj}^{a}))^{\frac{\beta}{2}} (\psi(e_{kk}^{a}))^{-\frac{\beta}{2}}f_{jk}^{a},\] 
so we arrive at the expression
\[
\frac{e^{-\beta}}{\delta^2} \sum_{k} (\psi(e_{jj}^{a}) \psi(e_{ii}^{a}))^{\frac{\beta}{2}} (\psi(e_{kk}^{a}))^{-\beta} \frac{1}{\psi(e_{ii}^{a})} \varphi(A(\delta_{ij} f_{kk}^{a})).
\]
This, in turn, is equal to 
\[
\frac{e^{-\beta}}{\delta^2} \delta_{ij} (\psi(e_{ii}^{a}))^{\beta-1} \sum_{k} (\psi(e_{kk}^{a}))^{-\beta} \varphi(A(f_{kk}^{a})).
\]
Now note that $\sum_{k} \frac{f_{kk}^{a}}{\psi(e_{kk}^{a})^{\beta}} = \rho_{a}^{-\beta-1}$, so we finally obtain
\[
\kappa_{\varphi}(f_{ij}^{a}) = \delta_{ij} \frac{e^{-\beta}}{\delta^2} (\psi(e_{ii}^{a})^{\beta-1} \varphi(A(\rho_{a}^{-\beta-1})).
\]
On the other hand $\varphi(f_{ij}^{a}) = \delta_{ij}\lambda_{a} (\psi(e_{ii}^{a}))^{\beta-1}$. By noting that $\varphi(X)=\sum_{b=1}^d\lambda_b\langle\rho_b^{\beta-1},X\rangle_\psi$ for all $X\in \B$, the KMS condition gives us the equation
\[
\frac{e^{-\beta}}{\delta^2} \sum_{b} \lambda_{b} \langle \rho_b^{\beta-1}, A (\rho_a^{-\beta-1})\rangle_{\psi} = \lambda_{a}, 
\]
which means that $(\lambda_1,\dots,\lambda_d)$ is an eigenvector of the matrix $[D_{ab}]$ where $D_{ab}= \langle \rho_b^{\beta-1}, A (\rho_a^{-\beta-1})\rangle_{\psi}$. This matrix has non-negative entries, because they can be expressed as traces of products of two positive matrices. Note that, unlike the tracial case, $\beta$ not only appears in the eigenvalue but also in the matrix itself. 

Let us see what happens in the case of a complete quantum graph, i.e. $Ax = \delta^2 \psi(x) \mathds{1}$. Then the left-hand side is equal to
\[
e^{-\beta} \sum_{b} \lambda_b \op{Tr}(\rho_b^{\beta}) \op{Tr}(\rho_a^{-\beta}) = e^{-\beta} \op{Tr} (\rho_a^{-\beta}),
\]
because $\sum_{a=1}^{d} \lambda_a \op{Tr}(\rho_a^{\beta}) = 1$. It follows that $\lambda_a = e^{-\beta} \op{Tr}(\rho_{a}^{-\beta})$. But from the normalization condition for $\lambda_a$'s we obtain the equation for $\beta$:
\[
\sum_{a} \op{Tr}(\rho_a^{\beta})\op{Tr}(\rho_a^{-\beta}) = e^{\beta},
\]
which is a nonlinear equation and might in general not have a solution, even in the case $d=1$, i.e. a single matrix block. Indeed, consider $M_2$ equipped with a diagonal density matrix (with entries $t$ and $1-t$). Then the equation becomes
\[
2 + (\frac{t}{1-t})^{\beta} + (\frac{1-t}{t})^{\beta} = e^{\beta}.
\]
For $\beta=0$ the left-hand side is bigger, so if any of the ratios $\frac{1-t}{t}$ or $\frac{t}{1-t}$ are at least equal to $e$ then the left-hand side will be larger for any $\beta>0$, so there will not be a solution. On the other hand, if both ratios are strictly smaller than $e$, then eventually the right-hand side will become larger, so there will be some solution by continuity.

\section*{Acknowledgments}
The first-named author was partially supported by the Research Foundation - Flanders (FWO) through a Postdoctoral fellowship (1221025N).
The second-named author was partially supported by the National Science Center, Poland (NCN) grant no. 2021/43/D/ST1/01446.

The project is co-financed by the Polish National Agency for Academic Exchange within the Polish Returns Programme. 

\vspace{5 pt}
\includegraphics[scale=0.5]{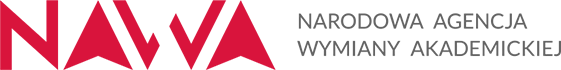}

\end{document}